\newtheorem{theorem}{Theorem}
\newtheorem{corollary}{Corollary}
\newtheorem{lemma}{Lemma}
\theoremstyle{definition}
\newtheorem{definition}{Definition}
\begin{document}

\title[]{Holomorphic Motions, Fatou Linearization, and Quasiconformal Rigidity for Parabolic
Germs}

\author{Yunping Jiang}

\date {July 20, 2007}

\address{Department of Mathematics\\
Queens College of the City University of New York\\
Flushing, NY 11367-1597\\
and\\
Department of Mathematics\\
Graduate School of the City University of New York\\
365 Fifth Avenue, New York, NY 10016\\
and\\
Current Address: School of Mathematical Sciences\\
Fudan University, Shanghai 200433, China}
\email[]{yunping.jiang@qc.cuny.edu}

\subjclass[2000]{Primary 37F99, Secondary 32H02}

\keywords{Parabolic germ, holomorphic motion,
(1+$\epsilon$)-quasiconformal homeomorphism}

\thanks{The research is partially supported by PSC-CUNY awards}

\begin{abstract}
By applying holomorphic motions, we prove that a parabolic germ is
quasiconformally rigid, that is, any two topologically conjugate
parabolic germs are quasiconformally conjugate and the conjugacy
can be chosen to be more and more near conformal as long as we
consider these germs defined on smaller and smaller neighborhoods.
Before proving this theorem, we use the idea of holomorphic
motions to give a conceptual proof of the Fatou linearization
theorem. As a by-product, we also prove that any finite number of
analytic germs at different points in the Riemann sphere can be
extended to a quasiconformal homeomorphism which can be more and
more near conformal as as long as we consider these germs defined
on smaller and smaller neighborhoods of these points.
\end{abstract}

\maketitle

\section{Introduction}

One of the fundamental theorems in complex dynamical systems is a
theorem called the Fatou linearization theorem. This theorem
provides a topological and dynamical structure of a parabolic
germ. A parabolic germ $f$ is an analytic function defined in a
neighborhood of a point $z_{0}$ in the complex plane ${\mathbb C}$
such that it fixes $z_{0}$ and some power $(f'(z_{0}))^{q}$ of the
derivative $f'(z_{0})$ of f at $z_{0}$ is $1$. Thus we can write
$f(z)$ into the following form:
$$
f(z) = z_{0} + \lambda (z-z_{0}) +a_{2}(z-z_{0})^{2} +\cdots
+a_{n}(z-z_{0})^{n}+\cdots, \quad z\in U
$$
where $U$ is a neighborhood of $z_{0}$ and $\lambda = e^{2\pi p i
\over q}$, where $p$ and $q$ are two integers relatively prime.
The number $\lambda$ is called the multiplier of $f$. Two
parabolic germs $f$ and $g$ at two points $z_{0}$ and $z_{1}$ are
said to be topologically conjugate if there is a homeomorphism $h$
from a neighborhood of $z_{0}$ onto a neighborhood of $z_{1}$ such
that
$$
h\circ f=g\circ h.
$$
If $h$ is a $K$-quasiconformal homeomorphism, then we say that $f$
and $g$ are $K$-quasiconformally conjugate.

By a linear conjugacy $\phi(z)= z-z_{0}$, we may assume that
$z_{0}=0$. So we only consider parabolic germs at $0$,
$$
f(z) = \lambda z +a_{2}z^{2} +\cdots +a_{n}z^{n}+\cdots, \quad
z\in U.
$$

Assume we are given a parabolic germ $f$ at $0$ whose multiplier
$\lambda =e^{2\pi i p/q}$, $(p,q)=1$. Then
$$
f^{q}(z) =z +az^{n+1} + o(z^{n+1}), \quad n\geq 1.
$$
If $a\not=0$, then $n+1$ is called the multiplicity of $f$. Here
$n=kq$ is a multiplier of $q$. The Leau-Fatou flower theorem says
that the local topological and dynamical picture of $f$ around $0$
can be described as follows: There are $n$ petals pairwise
tangential at $0$ such that each petal is mapped into the
$(kp)^{th}$-petal counting counter-clockwise from this petal.
These petals are called attracting petals. At the same time, there
are $n$-repelling petals, that is, there are $n$ other petals also
pairwise tangential at $0$ and the inverse $f^{-1}$ maps each
petal into the $(kp)^{th}$-petal counting counter-clockwise from
this petal. Thus $f^{q}$ maps every attracting petal into itself
and $f^{-q}$ maps every repelling petal into itself. Furthermore,
the Fatou linearization theorem says that the map
$$
f^{q} (z) : {\mathcal P}\to {\mathcal P}
$$
from any attracting petal ${\mathcal P}$ into itself is conjugate
to $G(w) =w+1$ from a right half-plane into itself by a conformal
map.

The union of all attracting petals and repelling petals forms a
neighborhood of $0$. If two parabolic germs are topologically
conjugate, then they have the same Leau-Fatou flowers in any
neighborhood of $0$. A parabolic germ is quasiconformal rigidity
as follows.

\vspace*{10pt}
\begin{theorem}~\label{qcr}
Suppose $f$ and $g$ are two parabolic germs at $0$ and suppose $f$
and $g$ are topologically conjugate. Then for every $\epsilon
>0$ there are neighborhoods $U_{\epsilon}$ and $V_{\epsilon}$
about $0$ such that $f|U_{\epsilon}$ and
$g|V_{\epsilon}$ are $(1+\epsilon)$-quasiconformally conjugate.
\end{theorem}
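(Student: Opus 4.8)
The plan is to combine the Fatou linearization theorem with the extension principle for analytic germs stated above: linearization turns the problem into gluing conformal maps across the overlaps of the petals, and the extension principle then performs this gluing by quasiconformal maps whose dilatation tends to $1$ as the petals, hence their overlap regions, are taken inside smaller and smaller neighborhoods of $0$. First I would record the combinatorial facts: topologically conjugate parabolic germs carry the same Leau--Fatou flower at $0$, so $f$ and $g$ have the same multiplier $\lambda=e^{2\pi ip/q}$ and the same multiplicity $n+1$ (with $n=kq$), and $f$ (resp. $g$) permutes its $n$ attracting petals in $k$ cycles of length $q$, each petal being invariant under the $q$-th iterate; the same holds for the repelling petals. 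Since $f$ itself has multiplier $\ne 1$ when $q>1$, I pass to $F=f^{q}$ and $G=g^{q}$, which are parabolic germs with multiplier $1$ and multiplicity $n+1$ and, after an initial linear conjugacy, with equal coefficient of $z^{n+1}$. I will construct, for each $\epsilon>0$ and on a suitable neighborhood $U_{\epsilon}$ of $0$, a $(1+\epsilon)$-quasiconformal homeomorphism $\Phi$ with $\Phi\circ f=g\circ\Phi$, and then put $V_{\epsilon}:=\Phi(U_{\epsilon})$. The relation $\Phi\circ f=g\circ\Phi$ forces the value of $\Phi$ on an entire $f$-cycle of petals to be determined by its value on any one petal of the cycle via $\Phi|_{f^{j}(\mathcal{P})}=g^{j}\circ\Phi\circ f^{-j}$; this is consistent precisely when the map built on $\mathcal{P}$ conjugates $F$ to $G$, and, $f$ and $g$ being holomorphic, the propagation does not change the dilatation. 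So all the work is on one representative petal per cycle and on the overlaps between petals.

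On each attracting petal of $F$ the Fatou linearization theorem provides a conformal Fatou coordinate conjugating $F$ to $w\mapsto w+1$ on a right half plane; applying it to $F^{-1}$ on repelling petals gives the analogous coordinates there, and likewise for $G$. On a single petal, composing a Fatou coordinate for $F$ with the inverse of the corresponding one for $G$ is then a \emph{conformal} conjugacy between the restrictions of $F$ and $G$. Since the $2n$ petals cover a punctured neighborhood of $0$, the only obstruction to patching these into one conjugacy is that the petalwise conformal conjugacies disagree on the overlaps of consecutive petals; their transition functions --- in Fatou coordinates, holomorphic maps commuting with $w\mapsto w+1$, essentially the horn maps of the parabolic germs --- are analytic germs on these overlap regions. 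The point is that if the petals are taken inside a small neighborhood of $0$, then in Fatou coordinates the overlap regions are pushed out toward the attracting/repelling ends, so that, after a suitable normalization of the finitely many Fatou coordinates, the transition germs are considered on domains shrinking to points; and on a sufficiently small domain any analytic germ is as $C^{1}$-close as we wish to its affine part, that is, to a conformal map.

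Now I would invoke the extension principle: these finitely many transition germs extend to a quasiconformal self-homeomorphism of $\overline{\mathbb{C}}$ whose maximal dilatation tends to $1$ as the germs' domains shrink --- the mechanism, as in the auxiliary result, being that a germ $C^{1}$-close to an affine map is the time-$1$ slice of a holomorphic motion over a disk $\mathbb{D}_{\rho}$ of arbitrarily large radius, so that by S\l odkowski's extension theorem for holomorphic motions the time-$1$ slice is $\tfrac{\rho+1}{\rho-1}$-quasiconformal. Pulling this gluing map back through the Fatou coordinates and using it to interpolate --- in Fatou coordinates, so that $\Phi\circ F=G\circ\Phi$ is preserved --- between the conformal conjugacies carried by two adjacent petals, one gets, on a small enough neighborhood of $0$, a quasiconformal conjugacy between $F$ and $G$ that is conformal on the petal interiors and $(1+\epsilon)$-quasiconformal on each of the finitely many overlap regions. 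Carrying out these gluings equivariantly and propagating over the remaining petals by the $f$- and $g$-dynamics then yields the desired $\Phi$ on $U_{\epsilon}$.

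The hard part will be the compatible, quantitative bookkeeping that makes ``smaller neighborhood $\Longleftrightarrow$ nearer conformal'' produce a single global map. One must choose the petals and their overlap regions, fix the normalizations of the finitely many Fatou coordinates, and select the interpolation profiles so that: (i) the transition germs, together with any linear twisting forced on them by the normalizations, become as close to conformal as needed as $U_{\epsilon}\downarrow\{0\}$ --- which may require modeling the twisting on subregions of large conformal modulus available inside the shrinking overlaps; (ii) the finitely many glued pieces all satisfy one common dilatation bound tending to $1$; (iii) the interpolated maps genuinely remain conjugacies; and (iv) the pieces agree on their mutual boundaries and the equivariant propagation over all $2n$ petals is consistent, so that the outcome is a homeomorphism of $U_{\epsilon}$ and not merely a collection of local maps. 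Verifying that global, not just local, quasiconformality survives all of these gluings, with the uniform bound, is the technical heart of the argument.
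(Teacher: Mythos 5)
Your plan is, in substance, the paper's own: pass to Fatou coordinates, use the Fatou linearization theorem to build \emph{conformal} conjugacies $\psi_i\colon P_{i,g}\to P_{i,f}$ on the attracting petals, note that the obstruction to gluing is concentrated in the repelling petals where the transition data (horn maps) are analytic germs tending to the identity as the domain shrinks, and extend with small dilatation via holomorphic motions and the Slodkowski theorem. The differences are mostly in how the gluing is made rigorous. You propose to invoke the germ-extension result (Theorem~\ref{conn}) as a black box on the horn maps viewed as germs at $0$ and $\infty$ and then ``interpolate'' across the repelling petal; the paper does not actually call Theorem~\ref{conn} here, but instead constructs an explicit holomorphic motion on the set $\Sigma = (U_\tau\cup D_{-\tau})\cup s_\tau\cup s_\tau'$: the two half-planes carry the Fatou conjugacies of the adjacent attracting petals transported to the repelling coordinate, the horn maps are conjugated by $\beta(w)=e^{2\pi i w}$ to germs at $0$ and $\infty$ and given a parameter $c$ by dilating (exactly the mechanism of Theorem~\ref{conn}), and the extra vertical segments $s_\tau,s_\tau'$ (one a fundamental domain edge, the other its $F$-image) close the configuration up so that the extended q.c.\ map $H(c(\tau),\cdot)$ can be propagated equivariantly over the whole repelling petal by $\Psi(w)=F^{-m}(H(w+m))$. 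That strip is what resolves your open items (iii) and (iv) --- the conjugacy relation and global homeomorphism property are built in from the start rather than patched after the fact --- so the technical bookkeeping you flag as ``the heart of the argument'' is precisely where the two proofs part ways in execution even though the strategy is identical. One small point in your favor: you handle the $q>1$ case by passing to $f^q$ and insisting on $f$-equivariance of the conjugacy across a petal cycle, whereas the paper simply sets $\lambda=1$ ``without loss of generality'' without explaining the reduction.
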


\vspace*{10pt} We give a proof of this theorem. The method in our
proof is again to use holomorphic motions as we did
in~\cite{Jiang1,Jiang2}, where we involved holomorphic motions
into some new proofs of famous K\"onig's theorem and B\"ottcher's
theorem, which provide normal forms of attracting and
super-attracting germs. We continue this idea for parabolic germs
in this paper.

The original of this theorem is not very clear to me. It seems
that Ecalle had an intensive study of parabolic germs and may have
already had certain result toward this theorem. Camanche probably
did some studies on the dynamics of parabolic germs and already
realized some geometric properties of the conjugacy between two
topologically conjugate parabolic germs. A written proof was given
by McMullen~\cite[Theorem 8.1]{McMullenBook} for a special case.
In the proof, he used Ahlfors-Weill's extension theorem which says
that a conformal mapping of the unit disk can be extended to a
quasiconformal homeomorphism as long as the hyperbolic norm of the
Schwarzian derivative of this conformal mapping is less than $2$.
Moreover, the quasiconformal dilatation can tend to $1$ as the
hyperbolic norm goes to $0$. Furthermore, Cui proposes a proof by
using Ecalle cylinders and the major inequality in Teichm\"uller
theory in~\cite{Cui}.

The paper is organized as follows. Since our proof involves
holomorphic motions, we would like to introduce this interesting
topic in \S1. In \S2, we will continue the idea in
~\cite{Jiang1,Jiang2} to show a conceptual proof of the Fatou
linearization theorem by involving holomorphic motions. In \S3, we
give prove Theorem~\ref{qcr}. As a by-product, we prove
Theorem~\ref{conn} and Corllary~\ref{connmore} which may be
considered in some sense a generalization of Alhfors-Weill's
extension theorem.

\section{Holomorphic Motions and Quasiconformal Maps}

In the study of complex analysis, the measurable Riemann mapping
theorem plays an important role. A measurable function $\mu$ on
$\hat{\mathbb C}$ is called a Beltrami coefficient if its
$L^{\infty}$-norm
$$
k=\|\mu\|_{\infty} < 1.
$$
The corresponding equation
$$
H_{\overline{z}}=\mu H_{z}
$$
is called the Beltrami equation. The measurable Riemann mapping
theorem says that the Beltrami equation has a solution $H$ which
is a quasiconformal homeomorphism of $\hat{\mathbb C}$ whose
quasiconformal dilatation is less than or equal to
$K=(1+k)/(1-k)$. It is called a $K$-quasiconformal homeomorphism.

The study of the measurable Riemann mapping theorem has a long
history since Gauss considered in the 1820's the connection with
the problem of finding isothermal coordinates for a given surface.
As early as 1938, Morrey~\cite{Morrey} systematically studied
homeomorphic $L^{2}$-solutions of the Beltrami equation. But it
took almost twenty years until in 1957 Bers~\cite{Bers} observed
that these solutions are quasiconformal (refer to~\cite[pp.
24]{Lehto}). Finally the existence of a solution to the Beltrami
equation under the most general possible circumstance, namely, for
measurable $\mu$ with $\|\mu\|_{\infty}<1$, was shown by
Bojarski~\cite{Bojarski}. In this generality the existence theorem
is sometimes called the measurable Riemann mapping theorem.

If one only considers a normalized solution to the Beltrami
equation (a solution that fixes $0$, $1$, and $\infty$), then $H$
is unique, and the solution is denoted by $H^{\mu}$. The solution
$H^{\mu}$ is expressed as a power series made up of compositions
of singular integral operators applied to the Beltrami equation on
the Riemann sphere. In this expression, if one considers $\mu$ as
a variable, then the solution $H^{\mu}$ depends on $\mu$
analytically. This analytic dependence was emphasized by Ahlfors
and Bers in their 1960 paper~\cite{AhlforsBers} and is essential
in determining a complex structure for Teichm\"uller space (refer
to~\cite{AhlforsBook,Lehto,Li,Nag}). Note that when $\mu\equiv 0$,
$H^{0}$ is the identity map. A $1$-quasiconformal homeomorphism is
conformal.

Twenty years later, due to the development of complex dynamics,
this analytic dependence presents an even more interesting
phenomenon called holomorphic motions as follows. Let
$$
\Delta_{r}=\{ c\in {\mathbb C}\;\; |\;\; |c|<r\}
$$
denote the disk of radius $0<r<1$ centered at $0$. We use $\Delta$
to mean the unit disk. Given a measurable function $\mu$ on
$\hat{\mathbb C}$ with $\|\mu\|_{\infty}=1$, we have a family of
Beltrami coefficients $c\mu$ for $c\in \Delta$ and the family of
normalized solutions $H^{c\mu}$. Note that $H^{c\mu}$ is a
$(1+|c|)/(1-|c|)$-quasiconformal homeomorphism. Moreover,
$H^{c\mu}$ is a family which is holomorphic on $c$. Consider a
subset $E$ of $\hat{\mathbb C}$ and its image $E_{c}=H^{c\mu}(E)$.
One can see that $E_{c}$ moves holomorphically in $\hat{\mathbb
C}$ when $c$ moves in $\Delta$. That is, for any point $z\in E$,
$z(c)=H^{c\mu}(z)$ traces a holomorphic path starting from $z$ as
$c$ moves in the unit disk. Surprisingly, the converse of the
above fact is also true too. This starts from the famous
$\lambda$-lemma of Ma\~n\'e, Sad, and
Sullivan~\cite{ManeSadSullivan} in complex dynamical systems. Let
us start to understand this fact by first defining holomorphic
motions.

\medskip
\begin{definition}[Holomorphic Motions]~\label{hm} Let $E$ be a
subset of $\hat{\mathbb C}$. Let
$$
h (c, z) : \Delta\times E\to \hat{\mathbb C}
$$
be a map. Then $h$ is called a holomorphic motion of $E$
parametrized by $\Delta$ and with the base point $0$ if
\begin{enumerate}
\item $h (0, z)=z$ for $z\in E$;
\item for any fixed $c\in
\Delta$, $h (c, \cdot): E\to \hat{\mathbb C}$ is injective;
\item for any fixed $z$, $h (\cdot,z): \Delta \to \hat{\mathbb C}$ is
holomorphic.
\end{enumerate}
\end{definition}

\medskip
For example, for a measurable function $\mu$ on $\hat{\mathbb C}$
with $\|\mu\|_{\infty}=1$,
$$
H(c, z)=H^{c\mu}(z): \Delta\times \hat{\mathbb C}\to \hat{\mathbb C}
$$
is a holomorphic motion of $\hat{\mathbb C}$ parametrized by
$\Delta$ and with the base point $0$.

Note that even continuity does not directly enter into the
definition; the only restriction is in the $c$ direction. However,
continuity is a consequence of the hypotheses from the proof of
the $\lambda$-lemma of Ma\~n\'e, Sad, and Sullivan~\cite[Theorem
2]{ManeSadSullivan}. Moreover, Ma\~n\'e, Sad, and Sullivan proved
in~\cite{ManeSadSullivan} that

\medskip
\begin{lemma}[$\lambda$-Lemma]~\label{ll}
A holomorphic motion of a set $E\subset \hat{\mathbb C}$
parametrized by $\Delta$ and with the base point $0$ can be
extended to a holomorphic motion of the closure of $E$
parametrized by the same $\Delta$ and with the base point $0$.
\end{lemma}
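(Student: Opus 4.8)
The plan is to prove the quantitative heart of the statement---that each slice $h(c,\cdot)$ is uniformly continuous and uniformly injective on $E$, with control depending only on $|c|$---and then obtain the extension to $\overline{E}$ by a routine completeness argument. First I would dispose of the trivial case: a finite set is already closed, so we may assume $E$ contains at least three distinct points $a,b,d$. Then I would normalize so that three points of $E$ are fixed by every slice: conjugating the whole motion by the M\"obius transformation that sends $a,b,d$ to $0,1,\infty$ we may assume $0,1,\infty\in E$, and then post-composing each slice $h(c,\cdot)$ with the M\"obius transformation $M_{c}$ that carries $h(c,0),h(c,1),h(c,\infty)$ back to $0,1,\infty$---this $M_{c}$ depends holomorphically on $c$ because $h(c,0),h(c,1),h(c,\infty)$ do and stay distinct---yields a holomorphic motion of $E$ with each slice fixing $0$, $1$ and $\infty$. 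Extending this normalized motion and composing back undoes both normalizations, so we assume this setup from now on.

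The main estimate comes from the Schwarz--Pick lemma on the thrice-punctured sphere. For distinct $z,w\in E\setminus\{0,1,\infty\}$ the holomorphic function
$$
c\longmapsto \frac{h(c,z)}{h(c,w)}
$$
maps $\Delta$ into $\hat{\mathbb C}\setminus\{0,1,\infty\}$: it avoids $0$ and $\infty$ because $h(c,z),h(c,w)\notin\{0,\infty\}$, and avoids $1$ because $h(c,\cdot)$ is injective. Equipping $\hat{\mathbb C}\setminus\{0,1,\infty\}$ with its complete hyperbolic metric $\rho$, the Schwarz--Pick lemma gives
$$
\rho\Big(\tfrac{h(c,z)}{h(c,w)},\ \tfrac{z}{w}\Big)\ \le\ d_{\Delta}(0,c),
$$
where $d_{\Delta}$ is the hyperbolic metric of $\Delta$; the same bound holds for the analogous ratios built from the other fixed points, such as $c\mapsto(h(c,z)-1)/(h(c,w)-1)$ and the cross-ratio $c\mapsto\tfrac{h(c,z)}{h(c,z)-1}\cdot\tfrac{h(c,w)-1}{h(c,w)}$, each of which again omits $0,1,\infty$. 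From these inequalities I would extract, for every $r\in(0,1)$, a modulus $\omega_{r}$ with $\omega_{r}(t)\to 0$ as $t\to 0^{+}$ and a function $\eta_{r}$ with $\eta_{r}(t)>0$ for $t>0$, so that for all $|c|\le r$ and all $z,w\in E$,
$$
\eta_{r}\big(d_{\mathrm{sph}}(z,w)\big)\ \le\ d_{\mathrm{sph}}\big(h(c,z),h(c,w)\big)\ \le\ \omega_{r}\big(d_{\mathrm{sph}}(z,w)\big),
$$
$d_{\mathrm{sph}}$ being the spherical metric. The upper bound is uniform equicontinuity of $\{h(c,\cdot):|c|\le r\}$; the lower bound says the images of distinct points stay uniformly apart.

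With this in hand the extension is immediate. Given $z\in\overline{E}$, pick $z_{n}\in E$ with $z_{n}\to z$; by the upper estimate $(h(c,z_{n}))_{n}$ is spherically Cauchy, uniformly for $|c|\le r$, for every $r<1$, so it converges to a limit $\hat h(c,z)$ that does not depend on the chosen sequence and equals $h(c,z)$ when $z\in E$. Then $\hat h(0,z)=\lim_{n}h(0,z_{n})=\lim_{n}z_{n}=z$, which is condition (1) of Definition~\ref{hm}; $c\mapsto\hat h(c,z)$ is a locally uniform limit of the holomorphic maps $c\mapsto h(c,z_{n})$, hence holomorphic, which is condition (3); and if $z\ne w$ in $\overline{E}$, choosing $z_{n}\to z$ and $w_{n}\to w$ we have $d_{\mathrm{sph}}(z_{n},w_{n})$ bounded away from $0$, hence so is $d_{\mathrm{sph}}(h(c,z_{n}),h(c,w_{n}))$, and in the limit $\hat h(c,z)\ne\hat h(c,w)$, which is condition (2). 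Composing with $M_{c}^{-1}$ and conjugating back undoes the normalizations and produces the extension of the original motion.

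The step I expect to be the real obstacle is the passage from the Schwarz--Pick inequality, which lives in the hyperbolic metric of the thrice-punctured sphere, to the two-sided spherical modulus of continuity above: one must control the geometry of the three cusps---where the hyperbolic metric blows up, so that bounded hyperbolic displacement forces small spherical displacement---together with the comparability of the hyperbolic and spherical metrics on the thick part, and one must handle the configurations in which $z$ or $w$ approaches one of the normalizing points $0,1,\infty$, where the ratio $h(c,z)/h(c,w)$ degenerates and one switches to whichever of the symmetric ratios keeps the relevant points away from the punctures. By comparison the normalization is bookkeeping and the passage to the closure is completeness plus the Weierstrass convergence theorem.
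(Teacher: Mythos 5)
The paper itself does not prove this lemma; it simply cites Ma\~n\'e--Sad--Sullivan \cite[Theorem 2]{ManeSadSullivan} and moves on, so there is no in-paper argument to compare against. Your proposal reconstructs the classical MSS argument: normalize so that three points of $E$ are fixed under every slice, feed the cross-ratio functions $c\mapsto h(c,z)/h(c,w)$ (and their companions) into the Schwarz--Pick lemma for the thrice-punctured sphere, extract a two-sided spherical modulus of continuity uniform over $|c|\le r$, and then extend to $\overline{E}$ by a Cauchy-sequence/Weierstrass argument. That is the right approach, and the normalization and passage-to-closure steps are handled correctly.

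You explicitly flag, and do not carry out, the conversion of the hyperbolic Schwarz--Pick bound into a spherical modulus. That is indeed the only substantive gap, and your description of the obstacle is accurate, so let me just indicate how it closes. The extra ingredient is the \emph{single-point} Schwarz--Pick bound: for $z\notin\{0,1,\infty\}$ the map $c\mapsto h(c,z)$ lands in $\hat{\mathbb C}\setminus\{0,1,\infty\}$ and hence $\rho(h(c,z),z)\le d_{\Delta}(0,c)$. Combined with completeness of the hyperbolic metric, this pins each $h(c,z)$ to a compact hyperbolic ball about $z$, so the three cusp neighborhoods are preserved in both directions. One then splits into the thick part, where hyperbolic and spherical metrics are comparable and the ratio bound $\rho\bigl(h(c,z)/h(c,w),\,z/w\bigr)\le d_{\Delta}(0,c)$ directly yields the two-sided modulus (ratio near the cusp at $1$ if and only if the points are spherically close), and the cusp neighborhoods, where the single-point bound alone already forces both images to lie near the same puncture; mixed configurations with one point in the thick part and one deep in a cusp are spherically separated to begin with, on both the domain and image side. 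Writing this out produces exactly the functions $\omega_{r}$ and $\eta_{r}$ you posited, after which your extension argument is complete.
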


\medskip
Furthermore, Ma\~n\'e, Sad, and Sullivan showed
in~\cite{ManeSadSullivan} that $f(c, \cdot)$ satisfies the Pesin
property. In particular, when the closure of $E$ is a domain, this
property can be described as the quasiconformal property. A
further study of this quasiconformal property was given by
Sullivan and Thurston~\cite{SullivanThurston} and Bers and
Royden~\cite{BersRoyden}. In~\cite{SullivanThurston}, Sullivan and
Thurston proved that there is a universal constant $a>0$ such that
any holomorphic motion of any set $E\subset \hat{\mathbb C}$
parametrized by $\Delta$ and with the basepoint $0$ can be
extended to a holomorphic motion of $\hat{\mathbb C}$ parametrized
by $\Delta_{a}$ and with the base point $0$. In~\cite{BersRoyden},
Bers and Royden showed, by using classical Teichm\"uller theory,
that this constant actually can be taken to be $1/3$. Moreover, in
the same paper, Bers and Royden showed that in any holomorphic
motion $H(c,z): \Delta\times \hat{\mathbb C}\to \hat{\mathbb C}$,
$H(c,\cdot): \hat{\mathbb C}\to \hat{\mathbb C}$ is a
$(1+|c|)/(1-|c|)$-quasiconformal homeomorphism for any fixed $c\in
\Delta$. In both papers~\cite{SullivanThurston,BersRoyden}, they
expected $a=1$. This was eventually proved by Slodkowski
in~\cite{Slodkowski}. Several
people~\cite{Douady,AstalaMartin,Chirka} gave different proofs for
Slodkowski's theorem.

\medskip
\begin{theorem}[Holomorphic Motion Theorem]~\label{hmt}
Suppose
$$
h(c,z): \Delta\times E\to \hat{\mathbb C}
$$
is a holomorphic motion of a set $E\subset \hat{\mathbb C}$
parametrized by $\Delta$ and with the base point $0$. Then $h$ can
be extended to a holomorphic motion
$$
H(c, z): \Delta\times \hat{\mathbb C}\to \hat{\mathbb C}
$$
of $\hat{\mathbb C}$ parametrized by also $\Delta$ and with the
base point $0$. Moreover, for every $c\in \Delta$,
$$
H(c, \cdot): \hat{\mathbb C}\to \hat{\mathbb
C}
$$
is a $(1+|c|)/(1-|c|)$-quasiconformal homeomorphism of
$\hat{\mathbb C}$. The Beltrami coefficient of $H(c, \cdot)$ given
by
$$
\mu(c,z)=\frac{\partial H(c, z)}{\partial
\overline{z}}/\frac{\partial H(c, z)}{\partial z}
$$
is a holomorphic function from $\Delta$ into the unit ball of the
Banach space ${\mathcal L}^{\infty}({\mathbb C})$ of all
essentially bounded measurable functions on ${\mathbb C}$.
\end{theorem}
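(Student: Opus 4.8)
\medskip
\noindent\textbf{A proof proposal.}
I would split the statement into three tasks: (i) produce the extension $H$; (ii) establish the quasiconformal bound for each slice $H(c,\cdot)$; (iii) show holomorphic dependence of the Beltrami coefficient on $c$. Before anything I would normalize. By Lemma~\ref{ll} we may assume $E$ is closed. Post-composing the given motion with a single ($c$-independent) M\"obius transformation --- which sends holomorphic motions to holomorphic motions --- we may assume $\{0,1,\infty\}\subseteq E$ and that these three points are fixed by $h(c,\cdot)$ for every $c$. If $E$ has at most three points there is nothing to prove, so assume $E$ is infinite.

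For task (i) the plan is a two-stage construction. \emph{Stage A} (the Sullivan--Thurston / Bers--Royden step, classical Teichm\"uller theory~\cite{SullivanThurston,BersRoyden}): for each finite $F$ with $\{0,1,\infty\}\subseteq F\subseteq E$, the assignment $c\mapsto h(c,\cdot)|_{F}$ determines a holomorphic map from $\Delta$ into the Teichm\"uller space $T(\hat{\mathbb C}\setminus F)$; using that a holomorphic map does not expand the Kobayashi metric, that the Kobayashi and Teichm\"uller metrics agree, and the Bers embedding, one extends the motion of $F$ to a holomorphic motion of $\hat{\mathbb C}$ over a definite subdisk $\Delta_{1/3}$, then passes to a limit along an exhaustion $F\nearrow E$ and along a countable dense subset of $E$ to extend the motion of $E$ over $\Delta_{1/3}$. \emph{Stage B} (Slodkowski's sharp step~\cite{Slodkowski}): improve the radius from $1/3$ to $1$. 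I would organize this as a Zorn's-lemma argument over the poset of pairs $(E',h')$ with $E\subseteq E'\subseteq\hat{\mathbb C}$ closed and $h'$ a holomorphic motion of $E'$ over $\Delta$ restricting to $h$; Lemma~\ref{ll} handles chains (pass to closures), and the crucial successor step is the one-point extension lemma: if $E'\subsetneq\hat{\mathbb C}$ is closed and $z_{0}\notin E'$, one can find a holomorphic $\phi\colon\Delta\to\hat{\mathbb C}$ with $\phi(0)=z_{0}$ whose graph in $\Delta\times\hat{\mathbb C}$ is disjoint from every graph $\{(c,h'(c,z)):c\in\Delta\}$, $z\in E'$. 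This last lemma is proved by an extremal/Perron-type construction resting on subharmonicity and normal-family estimates for the relevant families of holomorphic functions, and the maximal element produced by Zorn's lemma is then forced to be all of $\hat{\mathbb C}$.

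For task (ii): fix $c\in\Delta$. The slice $H(c,\cdot)$ is a homeomorphism of $\hat{\mathbb C}$ (continuity again from the $\lambda$-lemma / the construction). Restricting to a finite normalized $F$ and invoking non-expansion of the Teichm\"uller metric under the holomorphic map $\Delta\to T(\hat{\mathbb C}\setminus F)$ gives that the Teichm\"uller distance from the class of the identity to the class of $H(c,\cdot)|_{\hat{\mathbb C}\setminus F}$ is at most $\frac{1}{2}\log\frac{1+|c|}{1-|c|}$; letting $F$ exhaust a countable dense subset of $\hat{\mathbb C}$ and extracting a limit of the corresponding extremal maps shows $H(c,\cdot)$ is $K$-quasiconformal with $K\le\frac{1+|c|}{1-|c|}$. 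Hence its Beltrami coefficient $\mu(c,\cdot)$ satisfies $\|\mu(c,\cdot)\|_{\infty}\le|c|<1$, so $c\mapsto\mu(c,\cdot)$ takes values in the unit ball of $L^{\infty}(\mathbb C)$. For task (iii): the map $c\mapsto\mu(c,\cdot)$ is holomorphic into $L^{\infty}(\mathbb C)$. One route is to use the Ahlfors--Bers analytic dependence~\cite{AhlforsBers}: the correspondence $\mu\mapsto H^{\mu}$ is, in the appropriate sense, biholomorphic, so $c\mapsto H(c,\cdot)$ being holomorphic at every point together with the normalization fixing $0,1,\infty$ forces $c\mapsto\mu(c,\cdot)$ to be holomorphic; alternatively one checks directly that this map is locally bounded and weakly holomorphic --- pairing against $L^{1}(\mathbb C)$ and differencing the Beltrami equations --- and quotes the standard fact that a locally bounded weakly holomorphic Banach-space-valued map is holomorphic.

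The main obstacle is Stage B --- Slodkowski's sharp extension --- and inside it the one-point extension lemma with its subharmonic extremal argument; tasks (ii) and (iii) and Stage A are either formal or routine appeals to Teichm\"uller theory and to the Ahlfors--Bers theorem quoted above. In the paper itself I would therefore state the theorem with attribution, reduce (i) directly to Slodkowski's theorem, and carry out (ii)--(iii) as above, since precisely those two conclusions --- the sharp dilatation bound and the holomorphy of the Beltrami coefficient in $c$ --- are what the later sections need.
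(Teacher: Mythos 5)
The paper does not prove this theorem: it states it as a known result, credits the chain Sullivan--Thurston $\to$ Bers--Royden $\to$ Slodkowski for the extension and the sharp dilatation bound, and refers to the expository paper \cite{GardinerJiangWang} for a complete proof. Your closing recommendation --- state with attribution, reduce the extension to Slodkowski, and record the dilatation bound and holomorphy of $\mu(c,\cdot)$ as the two facts the later sections actually use --- is exactly what the paper does, and your sketch of the standard proof (Zorn's lemma plus the one-point extension lemma for Slodkowski, Teichm\"uller-metric non-expansion for the $(1+|c|)/(1-|c|)$ bound, Ahlfors--Bers analytic dependence for the holomorphy of the Beltrami coefficient in $c$) is consistent with the references cited, so no discrepancy to report.
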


The reader can read our recent expository
paper~\cite{GardinerJiangWang} for a complete proof of this
theorem and related topics.

\section{Leau-Fatou Flowers and Linearization}

Since the idea in~\cite{Jiang1,Jiang2} plays an important role in
the proof of Theorem~\ref{qcr}, we would like first to use it to
show a conceptual proof of the Fatou linearization theorem. This
proof is again an application of holomorphic motions.

Suppose $f(z)$ is a parabolic germ at $0$. Then there is a
constant $0<r_{0}<1/2$ such that $f(z)$ is conformal with the
Taylor expansion
$$
f(z) = e^{2\pi p i\over q} z+ h.o.t,  \quad (p,q)=1, \quad
|z|<r_{0}.
$$
Suppose $f^{m}\not\equiv id$ for all $m>0$. Then, for appropriate
$r_{0}$,
$$
f^{q}(z) = z (1+ a z^{n} + \epsilon(z)), \quad a\neq 0, \quad
|z|<r_{0},
$$
where $n$ is a multiple of $q$ and $\epsilon(z)$ is given be a
convergent power series of the form
$$\epsilon(z)=a_{n+1}z^{n+1}+a_{n+2} z^{n+2} + \cdots, \quad |z| <r_{0}.$$

Suppose $N\subset \Delta_{r_{0}}$ is a neighborhood of $0$. A
simply connected open set ${\mathcal P}\subset N\cap f^{q}(N)$
with $f^{q}({\mathcal P}) \subset {\mathcal P}$ and $0\in
\overline{\mathcal P}$ is called an attracting petal if $f^{m}(z)$
for $z\in {\mathcal P}$ converges uniformly to $0$ as $m\to
\infty$. An attracting petal ${\mathcal P}'$ for $f^{-1}$ is
called a repelling petal at $0$.

\vspace*{10pt}
\begin{theorem}[The Leau-Fatou flower]~\label{lff}
There exist $n$ attracting petals $\{ {\mathcal
P}_{i}\}_{i=0}^{n-1}$ and $n$ repelling petals $\{ {\mathcal
P}_{j}'\}_{j=0}^{n-1}$ such that
$$
N_{0} =\cup_{i=0}^{n-1}{\mathcal P}_{i}\cup \cup_{j=0}^{n-1}
{\mathcal P}_{j}'
$$
is a neighborhood of $0$.
\end{theorem}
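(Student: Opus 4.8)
The plan is to reduce the Leau-Fatou flower theorem to an explicit model computation by conjugating $f^q$ to a map close to a translation, using a Fatou-type coordinate $w = -1/(n a z^n)$ (or $w = 1/(c z^n)$ for a suitable constant $c$) defined on sectors where a branch of $z^{-n}$ can be taken holomorphically. Under this coordinate the relation $f^q(z) = z(1 + a z^n + \epsilon(z))$ transforms to $w \mapsto w + 1 + o(1)$ as $|w| \to \infty$, so that on each of the $2n$ sectors $\{|\arg(-naz^n) - \pi| < \pi - \delta\}$ (alternately attracting and repelling as $z^n$ ranges over directions where $\mathrm{Re}(az^n) < 0$ or $> 0$) the dynamics looks like an approximate unit translation. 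The attracting petals ${\mathcal P}_i$, $i = 0, \dots, n-1$, are the preimages under $w = -1/(naz^n)$ of large right half-planes $\{\mathrm{Re}\, w > R\}$ in these sectors; the repelling petals ${\mathcal P}_j'$ are the corresponding objects for $f^{-1}$, which has the same multiplicity $n+1$ with leading coefficient of opposite sign after the change of variable.

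The key steps, in order, would be: (i) Fix $\delta > 0$ small and $R$ large; on the sector $S_i = \{z : |\arg z - \theta_i| < (\pi/n) - \delta/n,\ 0 < |z| < r_0\}$, where $\theta_i$ is chosen so that $az^n$ points along the negative reals, define $\Phi(z) = -1/(naz^n)$, a conformal map onto a region containing a half-plane $\{\mathrm{Re}\, w > R\}$ minus a bounded set. (ii) Show by direct estimation using the convergent series for $\epsilon$ that $\tilde f(w) := \Phi \circ f^q \circ \Phi^{-1}(w) = w + 1 + O(|w|^{-1/n}\log|w|)$ (or some explicit decaying error) on $\{\mathrm{Re}\, w > R\}$ for $R$ large enough — this is where most of the computation sits. (iii) Deduce that $\tilde f$ maps $\{\mathrm{Re}\, w > R\}$ into itself and that iterates converge to $+\infty$ uniformly on compacta, hence ${\mathcal P}_i := \Phi^{-1}(\{\mathrm{Re}\, w > R\})$ is an attracting petal as defined (simply connected, contained in $N \cap f^q(N)$ for a suitable neighborhood $N$, with $0$ on its boundary, iterates converging to $0$). (iv) Apply the same construction to $f^{-1}$ (whose $q$-th iterate is $f^{-q}(z) = z(1 - a z^n + \cdots)$, so the attracting sectors are rotated by $\pi/n$) to produce the $n$ repelling petals ${\mathcal P}_j'$. (v) Verify the covering statement: as $z \to 0$, every direction $\arg z$ lies within $\delta/n$ of some attracting or repelling petal axis, and choosing $R$ large and $\delta$ small (but with the petals slightly overlapping near their common boundary rays) the union $\cup {\mathcal P}_i \cup \cup {\mathcal P}_j'$ swallows a full punctured neighborhood of $0$; adding the point $0$ gives the neighborhood $N_0$.

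**The main obstacle** is step (ii): producing a clean, honest estimate that $\Phi \circ f^q \circ \Phi^{-1}$ is within a summable/controllable error of the unit translation $w \mapsto w+1$, uniformly on a half-plane, starting only from $f^q(z) = z(1 + az^n + \epsilon(z))$ with $\epsilon$ of order $z^{n+1}$. One must carefully expand $\Phi(f^q(z)) = -1/\big(na\,(f^q(z))^n\big)$, use $(f^q(z))^n = z^n(1 + az^n + \epsilon(z))^n = z^n(1 + naz^n + \text{higher})$, and track that the correction to $\Phi(z) + 1$ decays like a negative power of $|w|$; the branch-of-$z^{-n}$ bookkeeping on the sector and the fact that $f^q$ maps the sector approximately into itself (so the composition is defined) also need attention. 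A secondary, more bookkeeping-heavy point is arranging the sectors/petals so their union is genuinely a neighborhood of $0$ rather than leaving thin gaps along the $2n$ separating rays $\{\mathrm{Re}(az^n) = 0\}$; this is handled by taking the half-planes large enough that the petals, though defined over sectors of opening slightly less than $2\pi/n$, actually overlap near $0$.
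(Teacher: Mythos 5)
The paper does not prove the Leau--Fatou flower theorem at all: immediately after stating it, the author writes ``For a proof of this theorem, the reader can refer to [Milnor]'' and moves on. So there is nothing in the paper to compare your argument against line by line; your sketch follows the same standard route (Fatou coordinate $w=-1/(naz^n)$, near-translation estimate, petals as preimages of large half-planes) that Milnor's book uses, which is exactly what the paper points to.

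That said, there is a real gap in your step (v), and your proposed fix goes in the wrong direction. If $\mathcal{P}_i=\Phi^{-1}(\{\operatorname{Re} w>R\})$ and $\mathcal{P}_j'$ is the analogous half-plane preimage for $f^{-1}$, then \emph{neither} petal contains any point of the separating rays $\{\operatorname{Re}(az^n)=0\}$ other than $0$: along such a ray one computes $\operatorname{Re} w=0$ identically, which is excluded from every half-plane $\{\operatorname{Re} w>R\}$ with $R>0$. So the union of half-plane-preimage petals is never a punctured neighborhood of $0$ --- there is no amount of ``slight overlap.'' Moreover, increasing $R$ shrinks the petals; it cannot help. The correct repair (and what Milnor actually does) is to replace the half-plane by a strictly larger region whose opening at $\infty$ exceeds $\pi$, e.g.\ a set of the form $\{w:\operatorname{Re} w > R - \alpha|\operatorname{Im} w|\}$ for some $\alpha>0$, or a sector $\{|\arg(w-R)|<\pi-\delta'\}$ with $\delta'$ small. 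One then re-runs the near-translation estimate of your step (ii) to check that $F$ still maps this wider region into itself (this uses the decay of the error term $\eta(1/\sqrt[n]{w})$, and works because the error is $O(|w|^{-1/n})$, hence summable along orbits), after which adjacent attracting and repelling petals genuinely overlap across each separating ray and the union covers a punctured neighborhood. Steps (i)--(iv) of your outline are sound and are exactly the standard computation; only the geometry of the region in the $w$-plane needs to be enlarged before the covering claim can be made.
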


For a proof of this theorem, the reader can refer
to~\cite{MilnorBook}.

For each attracting petal ${\mathcal P}={\mathcal P}_{i}$,
consider the change of coordinate
$$
w= \phi(z) =\frac{d}{z^{n}}, \quad d=-\frac{1}{na},
$$
on ${\mathcal P}$. Suppose the image of ${\mathcal P}$ under
$\phi(z)$ is a right half-plane
$$
R_{\tau} =\{ w\in {\mathbb C}\;\;|\;\; \Re{w} > \tau\}.
$$
Then
$$
z=\phi^{-1} (w) = \sqrt[n]{\frac{d}w}: R_{\tau} \to {\mathcal P}
$$
is a conformal map. The form of $f^{q}$ in the $w$-plane is
$$
F(w) = \phi\circ f\circ \phi^{-1}(w) =w+1 +\eta\Big(
\frac{1}{\sqrt[n]{w}}\Big), \quad \Re{w} >\tau,
$$
where $\eta(\xi)$ is an analytic function in a neighborhood of
$0$. Suppose
$$
\eta(\xi) =b_{1}\xi+b_{2}\xi^{2}+\cdots, \quad |\xi|<r_{1}
$$
is a convergent power series for some $0<r_{1}\leq r_{0}$. Take
$0<r<r_1$ such that
$$
|\eta(\xi)| \leq \frac{1}{2}, \quad \forall \; |\xi| \leq r.
$$
Then $F(R_{\tau}) \subset R_{\tau}$ for any $\tau \geq 1/r^{n}$
since
$$
\Re{F(w)} =\Re{w} +1 +\Re{\eta\Big(\frac{1}{\sqrt[n]{w}}\Big)}
\geq \Re{w} +\frac{1}{2}, \quad \forall \; \Re{w} \geq \tau.
$$

\vspace*{10pt}
\begin{theorem}[Fatou Linearization Theorem]~\label{flt}
Suppose $\tau > 1/r^{n}+1$ is a real number. Then there is a
conformal map $\Psi (w) : R_{\tau} \to \Omega$ such that
$$
F (\Psi (w)) = \Psi (w+1), \quad \forall \; w\in R_{\tau}.
$$
\end{theorem}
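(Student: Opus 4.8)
The plan is to produce the Fatou coordinate $\phi=\Psi^{-1}$, which conjugates $F$ to the translation $G(w)=w+1$, as a single slice of a holomorphic motion obtained by deforming $F$ to $G$, in the spirit of the treatment of K\"onigs' and B\"ottcher's theorems in \cite{Jiang1,Jiang2}. For $c\in\Delta$ set
$$
F_{c}(w)=w+1+c\,\eta\Big(\frac{1}{\sqrt[n]{w}}\Big),\qquad w\in R_{\tau},
$$
so that $F_{1}=F$ and $F_{0}=G$. Since $|c|<1$, $|\eta(\xi)|\le 1/2$ for $|\xi|\le r$, and $\tau>1/r^{n}+1$, the same computation as for $F$ gives $\Re F_{c}(w)\ge\Re w+1/2$; hence $F_{c}(R_{\tau})\subset R_{\tau}$, and $F_{c}$ is injective on $R_{\tau}$ (a small perturbation of the translation, with $F_{c}'(w)\to 1$ as $\Re w\to\infty$). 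I would solve, for every $c$, the linearization equation $\phi_{c}\circ F_{c}=\phi_{c}+1$ on $R_{\tau}$ normalized by $\phi_{0}=\mathrm{id}$, recognize $h(c,w)=\phi_{c}(w)$ as a holomorphic motion of $R_{\tau}$, and read off the theorem at $c=1$.

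To build $\phi_{c}$ I would use a corrected limit of iterates. Put
$$
\phi_{c,m}(w)=F_{c}^{m}(w)-\sigma_{c}(m),
$$
where $\sigma_{c}(m)$ is the ``formal part'' of the orbit: an explicit expression of the shape $m+\alpha_{1}(c)m^{1-1/n}+\cdots+\alpha_{n-1}(c)m^{1/n}+\beta(c)\log m+\gamma(c)$ whose coefficients are polynomials, without constant term, in the Taylor coefficients $cb_{j}$ of $c\eta$ — hence holomorphic in $c$ and vanishing at $c=0$; equivalently, after a preliminary finite change of coordinate (holomorphic in $c$, equal to the identity at $c=0$) bringing $F_{c}$ to the form $w+1+\beta(c)/w+O(w^{-1-1/n})$, one may simply take $\sigma_{c}(m)=m+\beta(c)\log m$. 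The coefficients of $\sigma_{c}$ are pinned down by requiring that
$$
\phi_{c,m+1}(w)-\phi_{c,m}(w)=c\,\eta\big((F_{c}^{m}(w))^{-1/n}\big)-\big(\sigma_{c}(m+1)-\sigma_{c}(m)-1\big)
$$
be $O(m^{-1-1/n})$ uniformly on compact subsets of $R_{\tau}\times\Delta$, i.e. by making the counter-terms cancel the slowly decaying contributions $O(m^{-1/n}),\dots,O(m^{-1})$. The series is then summable, so $\phi_{c,m}\to\phi_{c}$ locally uniformly; $\phi_{c}$ is holomorphic in $(c,w)$, $\phi_{0}=\mathrm{id}$, and since $\phi_{c,m}(F_{c}(w))=\phi_{c,m+1}(w)+\big(\sigma_{c}(m+1)-\sigma_{c}(m)\big)$ with $\sigma_{c}(m+1)-\sigma_{c}(m)\to 1$, letting $m\to\infty$ yields the functional equation $\phi_{c}(F_{c}(w))=\phi_{c}(w)+1$.

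It remains to see that $h(c,w)=\phi_{c}(w)$ is a holomorphic motion and that $\phi_{c}$ is conformal. Each $\phi_{c,m}$ is injective on $R_{\tau}$ and $\phi_{0,m}=\mathrm{id}$, so $\phi_{c,m}$ is itself a holomorphic motion of $R_{\tau}$ parametrized by $\Delta$ with base point $0$; by Theorem~\ref{hmt} it extends to a $(1+|c|)/(1-|c|)$-quasiconformal homeomorphism $H_{c,m}$ of $\hat{\mathbb C}$, and for fixed $c$ the family $\{H_{c,m}\}_{m}$, being uniformly quasiconformal and suitably normalized, is precompact, so its locally uniform limit is quasiconformal; in particular $\phi_{c}$ is injective. (A Hurwitz argument works equally well: $\phi_{c,m}'(w)=\prod_{j<m}F_{c}'(F_{c}^{j}(w))$ converges to $\phi_{c}'(w)$, which is nonzero and tends to $1$ as $\Re w\to\infty$, so the limit $\phi_{c}$ of the injective maps $\phi_{c,m}$ is non-constant, hence injective.) Being injective and holomorphic, $\phi_{c}:R_{\tau}\to\Omega_{c}:=\phi_{c}(R_{\tau})$ is conformal, and $h(c,w)=\phi_{c}(w)$ is a holomorphic motion with base point $0$. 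Taking $c=1$, $\phi:=\phi_{1}$ conjugates $F$ to $G$; inverting it — and, if necessary, enlarging $\tau$ or precomposing with a translation so that $\Psi:=\phi^{-1}$ has domain $R_{\tau}$ with image inside $R_{\tau}$, which is routine — gives the conformal $\Psi:R_{\tau}\to\Omega$ with $F(\Psi(w))=\Psi(w+1)$.

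The main obstacle is precisely the convergence of the approximants: in contrast with the attracting and super-attracting cases of \cite{Jiang1,Jiang2}, where the analogous sequences converge geometrically, the parabolic character makes the naive normalization $F_{c}^{m}(w)-m$ diverge — it drifts by the iterative-residue amount $\beta(c)\log m$, together with fractional powers of $m$ when $n\ge 2$. The crux is therefore to identify the correct counter-terms $\sigma_{c}(m)$, i.e. to carry out the formal normalization of $F_{c}$ with holomorphic dependence on $c$ and with $\sigma_{0}(m)=m$, and then to establish the $O(m^{-1-1/n})$ estimate on the corrected differences uniformly in $c$. Once that is in place, holomorphy in $c$, the functional equation, and — via Theorem~\ref{hmt} — the injectivity that upgrades ``holomorphic and non-constant'' to ``conformal'' and delivers the holomorphic-motion structure, all follow routinely.
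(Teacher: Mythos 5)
Your proposal takes a genuinely different route from the paper's. You deform $F$ to the translation inside the parameter disk by setting $F_c(w)=w+1+c\,\eta(w^{-1/n})$ and build the Fatou coordinate as a corrected limit of iterates $\phi_{c,m}(w)=F_c^m(w)-\sigma_c(m)$. The paper never touches the iterates of $F$ in this way. Instead it observes that the single map $H_x$, equal to the identity on the line $\{\Re w=x\}$ and to $w\mapsto w+\eta((w-1)^{-1/n})$ on the line $\{\Re w=x+1\}$, already conjugates $F$ to $w\mapsto w+1$ across a fundamental pair of lines; it then embeds $H_x$ as a slice of a holomorphic motion by rescaling $\eta(\xi)$ into $\eta\bigl(cr\xi\sqrt[n]{x-1}\bigr)$, extends by Theorem~\ref{hmt} to a quasiconformal homeomorphism of the strip, spreads the resulting conjugacy over $R_\tau$ dynamically by $\psi_m(w)=F^{-i}(\psi_m(w+i))$, and lets the base $x=x_m$ tend to infinity. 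The decisive feature is that the slice reproducing $H_x$ sits at $c(x)=1/(r\sqrt[n]{x-1})\to 0$, so the dilatation bound $K(x_m)=(1+c(x_m))/(1-c(x_m))\to 1$, and a normal-families limit of the $\psi_m$ is $1$-quasiconformal, hence conformal. That is precisely where the holomorphic motion earns its keep.

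In your scheme the map you actually want sits at $c=1$, which lies on the boundary of $\Delta$, not inside it. Theorem~\ref{hmt} controls the dilatation only by $(1+|c|)/(1-|c|)$, which diverges as $c\to 1$, so the holomorphic-motion structure yields nothing about $\phi_1$: it is your parenthetical Hurwitz argument that actually delivers injectivity and hence conformality of the limit. Once you rely on Hurwitz, the holomorphic-motion wrapper is vestigial and what remains is the classical Abel-equation proof (determine counter-terms $\sigma_c(m)$ so the corrected iterates converge, establish the $O(m^{-1-1/n})$ estimate, pass to the limit, apply Hurwitz). That proof is correct in outline, but the identification of $\sigma_c(m)$ with holomorphic dependence on $c$ and the summability estimate constitute exactly the hard analytic content that the paper's construction is designed to sidestep, and you only sketch them. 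If you want the holomorphic motion to carry real weight here, you should mimic the paper's rescaling so that the genuine map sits at a parameter $c(x)\to 0$ as the base point moves to infinity, forcing the quasiconformal constant furnished by Theorem~\ref{hmt} to tend to $1$.
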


Here we give a new proof by involving the holomorphic motions. For
other proofs, see~\cite{MilnorBook}~\cite{CarlesonGamlinBook}.

\subsection{Construction of a holomorphic motion.}
For any $x\geq \tau$, let
$$
E_{0,x} = \{ w\in {\mathbb C}\;|\; \Re{w} =x\}
$$
and
$$
E_{1,x} = \{ w\in {\mathbb C}\;|\; \Re{w} =x+1\}
$$
and let
$$
E_{x}=E_{0,x}\cup E_{1,x}.
$$
Then $E_{x}$ is a subset of $\hat{\mathbb C}$. Define
$$
H_{x} (w) =\left\{
\begin{array}{ll}
        w, & w\in E_{0,x}; \cr
        \Phi(w)=w + \eta\Big(\frac{1}{\sqrt[n]{w-1}}\Big), & w\in E_{1,x}.
\end{array}\right.
$$
Since $H_{x}(w)$ on $E_{0,x}$ and on $E_{1,x}$ are injective,
respectively, and since
$$
\Re(H_{x}(w)) \geq \Re(w) -\frac{1}{2} =x+1-\frac{1}{2} =
x+\frac{1}{2}, \quad w\in E_{1,x},
$$
the images of $E_{0,x}$ and $E_{1,x}$ under $H_{x}(w)$ do not
intersect. So $H_{x}(w)$ is injective. Moreover, $H_{x}(w)$
conjugates $F(w)$ to the linear map $w\mapsto w+1$ on $E_{0,x}$,
that is,
$$
F (H_{x}(w)) =H_{x} (w+1), \quad \forall \; w\in E_{0,x}.
$$

We first introduce a complex parameter $c\in \Delta$ into
$\eta(\xi)$ as follows. Define
$$
\eta (c, \xi) =\eta (cr\xi\sqrt[n]{x-1}) = b_{1}
(cr\xi\sqrt[n]{x-1}) +b_{2} (cr\xi\sqrt[n]{x-1})^{2} +\cdots
$$
for $|c|<1$ and $|\xi| \leq 1/\sqrt[n]{x-1}$. Since
$|cr\xi\sqrt[n]{x-1}|\leq r$, $\eta(c, \xi)$ is a convergent power
series and $|\eta(c, \xi)|\leq 1/2$ for $|c|<1$ and $|\xi|\leq
1/\sqrt[n]{x-1}$. Following this, we therefore introduce a complex
parameter $c\in \Delta$ for $H_{x}(w)$ as defining
$$
H_{x} (c, w) =\left\{
\begin{array}{ll}
        w, & (c, w) \in \Delta \times E_{0,x}; \cr
        \Phi(w)=w + \eta\Big(c, \frac{1}{\sqrt[n]{w-1}}\Big),
        & (c,w)\in \Delta\times E_{1,x}.
\end{array}\right.
$$

\begin{lemma} The map $H_{x}(c,w): \Delta\times \hat{\mathbb C} \to \hat{\mathbb C}$ is
a holomorphic motion.
\end{lemma}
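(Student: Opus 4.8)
The plan is to check the three conditions of Definition~\ref{hm} directly, regarding $H_{x}$ as a holomorphic motion of the set $E_{x}=E_{0,x}\cup E_{1,x}\subset\hat{\mathbb C}$ parametrized by $\Delta$ with base point $0$; the statement over all of $\hat{\mathbb C}$ then follows by invoking Theorem~\ref{hmt} to extend. Condition~(1) is immediate: the series $\eta(\xi)=b_{1}\xi+b_{2}\xi^{2}+\cdots$ has no constant term, so $\eta(0,\xi)=\eta(0)=0$, and hence $H_{x}(0,w)=w$ on both $E_{0,x}$ and $E_{1,x}$.

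For condition~(3), holomorphy of $c\mapsto H_{x}(c,w)$ on $\Delta$ for each fixed $w$, there is nothing to do on $E_{0,x}$, where $H_{x}(c,w)\equiv w$. On $E_{1,x}$ I would expand
\[
\eta\!\left(c,\tfrac{1}{\sqrt[n]{w-1}}\right)=\sum_{k\ge 1}b_{k}\,r^{k}(x-1)^{k/n}(w-1)^{-k/n}\,c^{k},
\]
and note that since $w\in E_{1,x}$ gives $|w-1|\ge\Re(w-1)=x>x-1>0$, one has $\big|r(x-1)^{1/n}(w-1)^{-1/n}\big|<r$, so this power series in $c$ has radius of convergence at least $r_{1}/r>1$; in particular it defines a holomorphic function of $c$ on $\Delta$.

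Condition~(2), injectivity of $H_{x}(c,\cdot)$ on $E_{x}$ for fixed $c\in\Delta$, has two halves. First, the images of the two lines are disjoint: for $w\in E_{1,x}$ the argument $\xi=(w-1)^{-1/n}$ satisfies $|\xi|\le x^{-1/n}\le(x-1)^{-1/n}$, so the bound $|\eta(c,\xi)|\le 1/2$ already established applies and gives $\Re H_{x}(c,w)\ge(x+1)-\tfrac12=x+\tfrac12$, whereas $\Re H_{x}(c,w)=x$ on $E_{0,x}$. Second, $H_{x}(c,\cdot)$ is injective on each line separately: on $E_{0,x}$ it is the identity, and on $E_{1,x}$, writing $w=1+u$ with $\Re u=x$ and $\psi(u)=u+\eta\big(cr(x-1)^{1/n}u^{-1/n}\big)$, a short computation (differentiate, use $|u|=|w-1|\ge x$, $|c|<1$, and the Cauchy bound $M:=\sup_{|\zeta|\le r}|\eta'(\zeta)|<\infty$) gives $|\psi'(u)-1|<Mr/(nx)$. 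Hence $\Re\psi'(u)>0$ along the line $\Re u=x$, provided $\tau$ — and therefore $x\ge\tau$ — is taken large enough that $Mr/(n\tau)<1$; this is no loss, since the hypothesis only asks $\tau>1/r^{n}+1$ and we are always free to replace $\tau$ by a larger value and later recover the full petal from the functional equation. With $\Re\psi'>0$ on the line, $t\mapsto\Im\psi(x+it)$ is strictly increasing, so $\psi$, hence $H_{x}(c,\cdot)$, is injective on $E_{1,x}$; combining the two halves, $H_{x}(c,\cdot)$ is injective on $E_{x}$.

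The only step that is more than routine bookkeeping is the injectivity of $H_{x}(c,\cdot)$ on the vertical line $E_{1,x}$: the perturbation $\eta(c,1/\sqrt[n]{w-1})$ of the identity is controllably small only when the local coordinate $1/\sqrt[n]{w-1}$ is small, i.e.\ when $x$ is large, so one either keeps $\tau$ large from the start or estimates the constant $M$ more carefully in terms of $r$ and $r_{1}$. Once the three conditions hold, $H_{x}(c,w)$ is a holomorphic motion of $E_{x}$, and Theorem~\ref{hmt} extends it to a holomorphic motion of $\hat{\mathbb C}$ for which each $H_{x}(c,\cdot)$ is $\tfrac{1+|c|}{1-|c|}$-quasiconformal with Beltrami coefficient depending holomorphically on $c$ — exactly the input needed to produce the conformal linearization $\Psi$ in Theorem~\ref{flt}.
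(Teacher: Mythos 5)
Your proof is correct and follows the same skeleton as the paper's: verify the three conditions of Definition~\ref{hm} on $E_x=E_{0,x}\cup E_{1,x}$, then extend via Theorem~\ref{hmt}. Conditions (1) and (3) are handled essentially as in the paper (which likewise expands the perturbation as a power series in $c$). The genuine difference is in condition (2): the paper simply cites Rouch\'e's theorem for injectivity on each of $E_{0,x}$, $E_{1,x}$, whereas you bound $|\psi'(u)-1|$ by $Mr/(nx)$, deduce $\Re\psi'>0$ along $\Re u=x$, and conclude injectivity from monotonicity of $\Im\psi$. Your version is more explicit and, usefully, surfaces a constraint that the paper leaves tacit: injectivity of $w\mapsto w+\eta\bigl(c,(w-1)^{-1/n}\bigr)$ on the line genuinely requires the Lipschitz constant of the perturbation to be $<1$, and the hypothesis $\tau>1/r^{n}+1$ alone does not guarantee this when $M=\sup_{|\zeta|\le r}|\eta'(\zeta)|$ is large; a rigorous Rouch\'e-type argument (comparing $\Phi(w)-\Phi(a)$ with $w-a$) meets exactly the same obstruction. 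As you note, this is harmless for Theorem~\ref{flt}, since one may start with a larger $\tau$ and the subsequent construction sends $x\to\infty$ anyway. You also correctly read the lemma as asserting a holomorphic motion of $E_x$ — the $\hat{\mathbb C}$ in its statement is a slip, since $H_x(c,\cdot)$ is only defined on $E_x$ and the extension to $\hat{\mathbb C}$ is precisely what Theorem~\ref{hmt} provides.
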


\begin{proof}
(1) It is clear that $H_{x}(0, w) =w$ for $w\in E_{x}$.

(2) Making use of Rouch\'e's theorem, it follows that for any
fixed $c\in \Delta$, $H_{x}(c, \cdot)$ is injective on $E_{0,x}$
and on $E_{1,x}$ , respectively. Since
$$
\Re{H_{x}(c, w)} =\Re{w} +\Re{\eta\Big(c,
\frac{1}{\sqrt[n]{w-1}}\Big)} \geq \Re{w}-\frac{1}{2}, \quad
\forall\; w\in E_{1,x},
$$
the images of $E_{0,x}$ and $E_{1,x}$ under $H_{x}(c,\cdot)$ do
not intersect. So $H_{x}(c, \cdot)$ is injective on $E_{x}$.

(3) For any fixed $w\in E_{0,x}$, $H_{x}(c,w) =w$. So $H(\cdot,w)$
is a holomorphic function of $c$. For any fixed $w\in E_{1,x}$,
$$
H_{x} (c, w) = w +
\eta\Big(\frac{cr\sqrt[n]{x-1}}{\sqrt[n]{w-1}}\Big).
$$
It is a convergent power series of $c$. So it is holomorphic. We
proved the lemma.
\end{proof}

By Theorem~\ref{hmt},
$$
H_{x}(c,w): \Delta\times E_{x}\to \hat{\mathbb C}
$$
can be extended to a holomorphic motion
$$
\widetilde{H}_{x} (c,w): \Delta \times \hat{\mathbb C}\to
\hat{\mathbb C}.
$$
For each $c\in \Delta$,
$$
h_{c} (w) =\widetilde{H}_{x} (c,w): \hat{\mathbb C}\to
\hat{\mathbb C}
$$
is a $(1+|c|)/(1-|c|)$-quasiconformal homeomorphism. When
$c(x)=1/(r\sqrt[n]{x-1})$, $h_{c(x)}$ is a quasiconformal
extension of $H_{x}(w)$ to $\hat{\mathbb C}$ whose quasiconformal
dilatation is less than or equal to
$$
K(x) =
\frac{1+\frac{1}{r\sqrt[n]{x-1}}}{1-\frac{1}{r\sqrt[n]{x-1}}}.
$$
Note that $K(x) \to 1$ as $x\to \infty$.

\subsection{Construction of quasiconformal conjugacies.}
Suppose
$$
S_{x} =\{ w\in {\mathbb C}\;|\; x\leq \Re{w}\leq x+1\}
$$
is the strip bounded by two lines $\Re{w}=x$ and $\Re{w}=x+1$.
Consider the restriction of $h_{c(x)}(w)$ on $S_{x}$ which we
still denote as $h_{c(x)}(w)$.

For any $w_{0}\in R_{\tau}\cup E_{0,\tau}$, let $w_{m}
=F^{m}(w_{0})$. Since $w_{m}-w_{m+1}$ tends to $1$ as $m$ goes to
$\infty$ uniformly on $R_{\tau}\cup E_{0,\tau}$,
$$
\frac{w_{n}-w_{0}}{m} =\frac{1}{m} \sum_{k=1}^{m} (w_{k}-w_{k-1})
\to 1
$$
uniformly on $R_{\tau}\cup E_{0,\tau}$ as $m$ goes to $\infty$. So
$w_{m}$ is asymptotic to $m$ as $m$ goes to $\infty$ uniformly in
any bounded set of $R_{\tau}\cup E_{0,\tau}$.

Let $x_{0}=\tau$ and $x_{m}=\Re(F^{m}(x_{0}))$. Then $x_{m}$ is
asymptotic to $m$ as $m$ goes to $\infty$. For each $m>0$, let
$$
\Upsilon_{m} = F^{-m}(E_{0, x_{m}}\cup\{\infty\}).
$$
It is a curve passing through $x_{0}=\tau$ and $\infty$. Let
$$
\Omega_{m} =F^{-m} (R_{x_{m}}).
$$
It is a domain with the boundary $\Upsilon_{m}$.

Let
$$
S_{i, x_{m}} =F^{-i}(S_{x_{m}}), \quad i=m, m+1, \cdots, 1, 0, -1,
\cdots, -m+1, -m, \cdots.
$$
Then
$$
\Omega_{m} =\cup^{i=m}_{-\infty} S_{i,x_{m}}.
$$

Let
$$
A_{m}= \{ w\in {\mathbb C}\;\;|\;\; \tau+ m\leq \Re{w} \leq
\tau+m+1\}
$$
and let $A_{i, m} =A_{m}-i$ for $i=m, m+1, \cdots, 1, 0, -1,
\cdots, -m+1, -m, \cdots$. Let
$$
\beta_{m} (w) =w+ x_{m} -\tau -m: {\mathbb C}\to {\mathbb C}.
$$
Then it is a conformal map and
$$
\beta_{m} (A_{m}) =S_{x_{m}}.
$$
Define
$$
\psi_{m} (w) = h_{c(x_{m})}\circ \beta_{m} (w).
$$
Then it is a $K(x_{m})$-quasiconformal homeomorphism on $A_{m}$.
Moreover,
$$
F(\psi_{m} (w))=\psi_{m} (w+1), \quad  \forall \; \Re{w}=m+\tau.
$$
Furthermore, define
$$
\psi_{m}(w) = F^{-i}(\psi_{m} (w+i)), \quad \forall \; w\in
A_{i,m}
$$
for $i=m, m-1, \cdots, 1, 0, -1, \cdots, -m+1, -m, \cdots$. Then
it is a $K(x_{m})$-quasiconformal homeomorphism from $R_{\tau}$ to
$\Omega_{m}$ and
$$
F (\psi_{m}(w)) = \psi_{m}(w+1), \quad \forall \; w\in R_{\tau}.
$$

\subsection{Improvement to conformal conjugacy.}

Let $w_{0}=\tau$ and $w_{m} = F^{m}(w_{0})$ for $m=1, 2,\cdots$.
Remember that
$$
R_{x_{m}}=\{ w\in {\mathbb C}\;|\; \Re{w} >x_{m}\}
$$
where $x_{m}=\Re{w_{m}}$.

For any $\tilde{w}_{0}\in R_{x_{m+1}}$, let $\tilde{w}_{m} =
F^{m}(\tilde{w}_{0})$ for $m=1, 2, \cdots$. Since
$$
F'(w) = 1+O\Big(\frac{1}{|w|^{1+\frac{1}{n}}}\Big),\quad w\in
R_{\tau}
$$
and $\tilde{w}_{m}/m \to 1$ as $m\to \infty$ uniformly on any
compact set, there is a constant $C>0$ such that
$$
C^{-1}\leq \frac{|\tilde{w}_{m}-w_{m}|}{|\tilde{w}_{1}-w_{1}|}
=\prod_{k=1}^{m}
\frac{|\tilde{w}_{k+1}-w_{k+1}|}{|\tilde{w}_{k}-w_{k}|} =
\prod_{k=1}^{m} \Big(1+O\Big(\frac{1}{k^{1+\frac{1}{n}}}\Big)\Big)
\leq C
$$
as long as $w_{1}$ and $\tilde{w_{1}}$ keep in a same compact set.
Since
$$
w_{m+1} =w_{m} +1+\eta\Big(\frac{1}{\sqrt[n]{w_{m}}}\Big) \quad
\hbox{and}\quad |\eta\Big(\frac{1}{\sqrt[n]{w_{m}}}\Big)|\leq
\frac{1}{2},
$$
the distance between $w_{m+1}$ and $R_{x_{m}}$ is greater than or
equal to $1/2$. So the disk $\Delta_{1/2}(w_{m+1})$ is contained
in $R_{x_{m}}$. This implies that the disk
$\Delta_{1/(2C)}(w_{1})$ is contained in $\Omega_{m}$ for every
$m=0, 1, \cdots$. Thus the sequence
$$
\psi_{m} (w) : R_{\tau}\to \Omega_{m}, \quad m=1, 2, \cdots
$$
is contained in a weakly compact subset of the space of
quasiconformal mappings. Let
$$
\Psi (w): R_{\tau}\to \Omega
$$
be a limiting mapping of a subsequence. Then $\Psi $ is
$1$-quasiconformal and thus conformal and satisfies
$$
F (\Psi (w)) = \Psi (w+1), \quad \forall w\in R_{\tau}.
$$
This completes the proof of Theorem~\ref{flt}.

\section{Quasiconformal Rigidity}

In this section, we prove Theorem~\ref{qcr} by using a similar
idea in the proof of Theorem~\ref{flt} in \S3.

\subsection{Conformal conjugacies on attracting petals.}

Suppose $f$ and $g$ are two topologically conjugate parabolic
germs. Suppose $f^{m}, g^{m} \not\equiv id$ for all $m>0$. (If
some $f^{m}\equiv \hbox{identity}$, then $g^{m}\equiv
\hbox{identity}$ too.) Suppose $\lambda$ and $n+1$ are their
common multiplier and multiplicity. Suppose $0<r_{0}<1/2$ such
that both $f$ and $g$ are conformal in $\Delta_{r_{0}}$. Without
loss of generality, we assume that $\lambda=1$ and both of $f$ and
$g$ have forms
$$
f(z) = z(1+ z^{n} +o(z^{n})) \quad \hbox{and}\quad g(z) =z(1+
z^{n} +o(z^{n})), \quad |z| <r_{0}.
$$

From Theorem~\ref{lff}, for any small neighborhood $N\subset
\Delta_{r_{0}}$, there are $n$ attracting petals
$\{P_{i,f}\}_{i=0}^{n-1}$ and $n$ repelling petals
$\{P_{i,f}'\}_{i=0}^{n-1}$ for $f$ in $N$. Let us assume that
every $P_{i,f}$ is the maximal attracting petal in $N$. Similarly,
we have the same pattern of attracting petals $\{
P_{i,g}\}_{i=0}^{n-1}$ and the repelling petals $\{
P_{i,g}'\}_{i=0}^{n-1}$ for $g$

From Theorem~\ref{flt} and also~\cite[page 107]{MilnorBook}, for
every $0\leq i\leq n-1$, there is a conformal map
$$
\psi_{i}: P_{i,g}\to P_{i,f}
$$
such that
$$
f(\psi_{i} (z)) = \psi_{i} (g (z)), \quad z\in P_{i,g}.
$$

\subsection{Repelling petals.}

For each $0\leq i\leq n-1$, let
$$
w=\phi (z) =-\frac{1}{nz^{n}}
$$
be the change of coordinate. Then $f$ and $g$ in the
$w$-coordinate system have forms
$$
F(w) =w+1+ \eta_{f} \Big( \frac{1}{\sqrt[n]{w}}\Big) \quad
\hbox{and}\quad G(w) =w+1+ \eta_{g} \Big(
\frac{1}{\sqrt[n]{w}}\Big),
$$
where both of
$$
\eta_{f}(\xi) =a_{1}\xi +a_{2}\xi +\cdots\quad \hbox{and}\quad
\eta_{g}(\xi) =b_{1}\xi +b_{2}\xi +\cdots, \;\; |\xi|<r_{1}
$$
are convergent power series for some number $0<r_{1}<r_{0}$. Take
a number $0<r<r_{1}$ such that
$$
|\eta_{f}(\xi)|, |\eta_{g}(\xi)| \leq \frac{1}{4}, \quad |\xi|
\leq r.
$$
Without loss of generality, we assume that $\eta_{g} (w)\equiv 0$,
that is, $G(w)=w+1$.

Suppose the both repelling petals $P_{i,f}'$ and $P_{i,g}'$ are
changed to a left half-plane
$$
L_{-r^{n}} = \{w\in {\mathbb C} \;\; |\;\; \Re{w} <-r^{n}\}.
$$

\subsection{Construction of a holomorphic motion.}

Take $\tau_{0}=r^{n}$. Let
$$
U_{\tau_{0}} =\{ w\in {\mathbb C} \;\; |\;\;  \Im{w} > \tau_{0}\}
$$
be a upper half-plane and let
$$
D_{-\tau_{0}} =\{ w\in {\mathbb C} \;\; |\;\; \Im{w}<-\tau_{0}\}
$$
be a lower half-plane. Define
$$
\Psi (w) = \left\{ \begin{array}{ll} \phi \circ \psi_{i}\circ
\phi^{-1} (w), & w\in U_{\tau_{0}},\\
\phi\circ \psi_{i+1}\circ \phi^{-1} (w), & w \in D_{\tau_{0}}.
\end{array}\right.
$$
(If $i+1=n$, we consider it as $0$.) Then
$$
F (\Psi (w)) =\Psi (G(w)), \quad w\in U_{\tau_{0}}\cup
D_{\tau_{0}}.
$$
We can have the property that $\Psi (w)/w \to 1$ as $w\to \infty$
(refer to~\cite[pp. 109]{MilnorBook}.

Let $a=e^{-2\pi \tau_{0}}$. Consider the covering map
$$
\xi =\beta (w) = e^{2\pi i w}: {\mathbb C}\to {\mathbb C}\setminus
\{0\}.
$$
Then it maps $U_{\tau_{0}}$ to $\Delta_{a}\setminus \{0\}$ and
$D_{-\tau_{0}}$ to ${\mathbb C}\setminus \overline{\Delta}_{1/a}$.

The inverse of $w=\beta^{-1} (\xi)$ is a multi-valued analytic
function on ${\mathbb C}\setminus \{ 0\}$. We take one branch as
$\beta^{-1}$. Since $\Psi (w)$ is asymptotic to $w$ as $w\to
\infty$, the map
$$
\theta (\xi) =\beta \circ \Psi \circ \beta^{-1} (\xi)
$$
is analytic in $\Delta_{a}$ and in
$\overline{\Delta}_{1/a}^{c}=\hat{\mathbb C}\setminus
\overline{\Delta}_{1/a}$. Suppose
$$
\theta (\xi) =\xi + a_{2} \xi^{2}+\cdots, \quad |\xi| <a
$$
and
$$
\theta (\xi) =\xi + \frac{b_{1}}{ \xi}+\cdots,\quad
|\xi|>\frac{1}{a}
$$
are two convergent power series.

For any $\tau >\tau_{0}$, let $\epsilon = e^{-2\pi \tau}$. Suppose
$\overline{\Delta}_{1/\epsilon^{c}} =(\hat{\mathbb C} \setminus
\overline{\Delta}_{1/\epsilon})$. Let
$$
E=\Delta_{\epsilon}\cup \overline{\Delta}_{1/\epsilon}^{c}.
$$
It is a subset of $\hat{\mathbb C}$. We now introduce a complex
parameter $c\in \Delta$ into $\theta (\xi)$ such that it is a
holomorphic motion of $E$ parametrized by $\Delta$ and with the
base point $0$.

Define
$$
\theta (c, \xi) = \frac{\epsilon}{ca} \theta \Big(
\frac{ca\xi}{\epsilon}\Big)=\xi + a_{2}
\Big(\frac{ca}{\epsilon}\Big)\xi^{2} +\cdots , \quad |c|<1, |\xi|
\leq \epsilon.
$$
and
$$
\theta(c, \xi) = \frac{ca}{\epsilon} \theta \Big( \frac{\epsilon
\xi}{ca} \Big)= \xi + \frac{b_{1}}{\xi}
\Big(\frac{ca}{\epsilon}\Big)^{2} +\cdots , \quad |c|<1, |\xi|
\geq \frac{1}{a}.
$$
We claim that
$$
\theta (c, \xi): \Delta\times E\mapsto \hat{\mathbb C}
$$
is a holomorphic motion. Here is a proof.

(1) It is clear that $\theta(0, \xi)=\xi$ for all $\xi\in E$.

(2) For any fixed $c\neq 0\in \Delta$, $\theta(c,\xi)$ on
$\Delta_{\epsilon}$ is a conjugation map of $\theta (\xi)$ by the
linear map $\xi \mapsto (ca/\epsilon)\xi$. And $\theta(c,\xi)$ on
$\overline{\Delta}_{1/\epsilon}^{c}$ is a conjugation map of
$\theta (\xi)$ by the linear map $\xi \mapsto (\epsilon/(ca))\xi$.
So they are injective. Since the image $\theta (c,
\Delta_{\epsilon})$ is contained in $\Delta_{a}$ and the image
$\theta (c, \overline{\Delta}_{1/\epsilon}^{c})$ is contained in
$\overline{\Delta}_{1/a}^{c}$, they do not intersect. So $\theta
(c, \cdot)$ on $E$ is injective.

(3) For any fixed $\xi\in \Delta_{\epsilon}$, since
$|ca\xi/\epsilon| < a$ for $|c|<1$, it is a convergent power
series of $c$. So $\theta (\cdot, \xi)$ is holomorphic on $c$. For
any fixed $\xi\in \overline{\Delta}_{1/\epsilon}^{c}$, since
$|\epsilon\xi/(ca)|
>1/a$ for $|c|<1$, so it is a convergent power
series of $c$. So $\theta(\cdot, \xi)$ is holomorphic on $c$. We
proved the claim.

Let $E_{\tau} = U_{\tau}\cup D_{-\tau}$. Then $\beta (E_{\tau})
=E$. Thus we can lift the holomorphic motion
$$
\theta (c, \xi): \Delta\times E\to \hat{\mathbb C}
$$
to get a holomorphic motion
$$
h_{0}(c, w): \Delta\times E_{\tau}\to \hat{\mathbb C}.
$$
When $c(\tau) =\epsilon/a$, $h(c(\tau), w) =\Psi (w)$.

Let $w_{1} = -\tau +i\tau$ and $w_{2}= -\tau-i\tau$. Consider the
vertical segment connecting them
$$
s_{\tau} =\{ tw_{1} +(1-t) w_{2}\;|\; 0\leq t\leq 1\}.
$$
Let
$$
s_{\tau}' = s_{\tau}+1 =\{ tw_{1} +(1-t) w_{2} +1\;|\; 0\leq t\leq
1\}.
$$
Define
$$
h_{1}(c, tw_{1}+(1-t)w_{2}) = t h_{0}(c, w_{1}) +(1-t) h_{0}(c,
w_{2}) : \Delta\times s_{\tau}\to \hat{\mathbb C}
$$
and
$$
h_{2} (c, tw_{1}+(1-t)w_{2}+1) = F(h_{1}(c, tw_{1}+(1-t)w_{2}):
\Delta\times s_{\tau}'\to \hat{\mathbb C}.
$$
Both of these maps are holomorphic motions. Since
$$
h_{2} (c, tw_{1}+(1-t)w_{2}+1)
$$
$$
=t h_{0}(c, w_{1}) +(1-t) h_{0}(c, w_{2})+1 +\eta (t h_{0}(c,
w_{1}) +(1-t) h_{0}(c, w_{2}))
$$
and since
$$
|\eta(w)|\leq 1/4, \quad  \forall \; |w|\geq \tau,
$$
the images of these two holomorphic motions do not intersect.
Therefore, we define a holomorphic motion
$$
h(c,w) =\left\{ \begin{array}{ll}
                 h_{0}(c,w), & (c,w)\in \Delta\times E_{\tau};\\
                 h_{1}(c,w), & (c,w) \in \Delta\times s_{\tau};\\
                 h_{2}(c,w), & (c,w) \in \Delta\times s_{\tau}'
       \end{array}\right.
$$
of $\Sigma =E_{\tau}\cup s_{\tau}\cup s_{\tau}'$ parametrized by
$\Delta$ and with base point $0$.

For $c(\tau)=\epsilon/a$, $h(c(\tau),w)$ is a conjugacy from $F$
to $G$ on $E_{\tau}\cup s_{\tau}$. i.e.,
$$
F(h(c(\tau), w)) = h(c(\tau), G(w)), \quad w\in E_{\tau}\cup
s_{\tau}.
$$

By Theorem~\ref{hmt}, there is an extended holomorphic motion of
$h(c,w)$,
$$
H(c,w): \Delta\times \hat{\mathbb C}\mapsto \hat{\mathbb C}.
$$
such that for each $c\in \Delta$, $H(c,\cdot)$ is a
$(1+|c|)/(1-|c|)$-quasiconformal homeomorphism of $\hat{\mathbb
C}$.

Let $H(w) = H(c(\tau), w)$ and
$$
K(\tau) =\frac{1+c(\tau)}{1-c(\tau)}.
$$
Note that $K(\tau) \to 1$ as $\tau\to \infty$. Then $H(w)$ is a
$K(\tau)$-quasiconformal homeomorphism of $\hat{\mathbb C}$ such
that
$$
H(w) = h(c(\tau), w), \quad \forall w\in \Sigma.
$$

Let
$$
A_{0} =\{ w\in {\mathbb C}\;|\; -\tau\leq \Re{w}\leq -\tau +1\}
$$
and $A_{-m} =A_{0}-m$ for $m=1, 2, \cdots$. Define
$$
\Psi (w) = F^{-m}(H(w+m)), \quad w\in A_{-m}, \quad m=0, 1,
\cdots.
$$
Then $\Psi (w)$ is a $K(\tau)$-quasiconformal homeomorphism
defined on the left half-plane
$$
L_{-\tau+1} =\{ w \in {\mathbb C}\;|\; \Re{w} \leq -\tau+1\}
$$
and extends $\Psi (w)$ on $U_{\tau}\cup D_{-\tau}$.

Now let
$$
\psi (z) = \phi^{-1} \circ \Psi \circ \phi (z).
$$
It extends
$$
\psi_{i}: P_{i,g}\to P_{i,f}\quad \hbox{and}\quad \psi_{i+1}:
P_{i+1,g}\to P_{i+1,f}
$$
in a small neighborhood $N$ to a $K(\tau)$-quasiconformal
homeomorphism
$$
\psi (z) : P_{i,g}\cup P_{i,g}'\cup P_{i+1,g}\to P_{i,f}\cup
P_{i,f}'\cup P_{i+1,f}
$$
and
$$
f\circ \psi (z) =\psi \circ g(z), \quad \forall z \in P_{i,g}\cup
P_{i,g}'\cup P_{i+1,g}.
$$

If we work out the above for every $0\leq i\leq n-1$, we get that
for any $\varepsilon >0$, there is a neighborhood
$U_{\varepsilon}$ of $0$ and a
$(1+\varepsilon)/(1-\varepsilon)$-quasiconformal homeomorphism
$$
\psi (z): U_{\varepsilon}  \to V_{\varepsilon} =\psi
(V_{\varepsilon})
$$
such that it extends every $\psi_{i}: P_{i,g}\to P_{i,f}$ in
$U_{\varepsilon}$ and such that
$$
f\circ \psi (z) =\psi \circ g(z), \quad \forall z\in
U_{\varepsilon}.
$$
This gives a complete proof of Theorem~\ref{qcr}.

\section{Gluing Germs in the Riemann Sphere}

As a by-product of our proofs of Theorems~\ref{qcr} and~\ref{flt},
we prove a theorem saying that we can use a quasiconformal
homeomorphism to glue an arbitrary finite number of parabolic
germs in the Riemann sphere at different points. This theorem may
be thought of as a generalization of Ahlfors-Weill's extension
Theorem which basically consider one germ. Again our proof is
based on holomorphic motions. Suppose $\Delta_{r}(z)$ is the disk
of radius $r>0$ centered at $z$.

\vspace*{10pt}
\begin{theorem}~\label{conn}
Suppose $\{ f_{i}\}_{i=1}^{k}$ is a finite number of parabolic
germs at distinct points $\{z_{i}\}_{i=1}^{k}$ in the complex
plane ${\mathbb C}$. Then for every $\epsilon>0$ there exists a
number $r>0$ and a $(1+\epsilon)$-quasiconformal homeomorphism $f$
of $\hat{\mathbb{C}}$ such that
$$
f|_{\Delta_{r}(z_i)}=f_i|_{\Delta_{r}(z_i)}, \quad i=1, \cdots, k.
$$
\end{theorem}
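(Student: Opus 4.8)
The plan is to reduce, by a preliminary quasiconformal change of coordinate whose dilatation is close to $1$, to the case where all multipliers equal $1$, and then to build the gluing map from a single holomorphic motion, in the spirit of the proof of Theorem~\ref{flt}.

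First I would fix a small $r_{0}>0$ so that each $f_{i}$ is conformal on $\overline{\Delta_{r_{0}}(z_{i})}$, the disks $\overline{\Delta_{r_{0}}(z_{i})}$ are pairwise disjoint, and $M_{i}:=\sum_{n\ge2}|a_{n,i}|r_{0}^{\,n-2}<\infty$, where $\rho_{i}(z)=z_{i}+\lambda_{i}(z-z_{i})$ is the rotation about $z_{i}$ by the (unimodular) multiplier $\lambda_{i}=e^{i\phi_{i}}=f_{i}'(z_{i})$ and $g_{i}:=f_{i}\circ\rho_{i}^{-1}$ has expansion $g_{i}(z)=z+\sum_{n\ge2}a_{n,i}(z-z_{i})^{n}$ on $\overline{\Delta_{r_{0}}(z_{i})}$; each $\rho_{i}$ preserves every disk about $z_{i}$, and $g_{i}$ is again a conformal germ at $z_{i}$, now with multiplier $1$. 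For $0<r<r_{0}$, I would let $R$ be the homeomorphism of $\hat{\mathbb C}$ equal to $\rho_{i}$ on $\Delta_{r}(z_{i})$, to the twist map $z\mapsto z_{i}+(z-z_{i})\exp\!\bigl(i\phi_{i}\log(r_{0}/|z-z_{i}|)/\log(r_{0}/r)\bigr)$ on $r\le|z-z_{i}|\le r_{0}$, and to the identity elsewhere. Computing the Beltrami coefficient of the twist map gives $\|\mu_{R}\|_{\infty}\le(\max_{i}|\phi_{i}|)/(2\log(r_{0}/r))$, so $R$ is $K_{R}(r)$-quasiconformal with $K_{R}(r)\to1$ as $r\to0$.

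Next, following \S3.1 almost verbatim with $g_{i}$ in place of $F$, I would set $c(r)=r/r_{0}$ and define on $E=\bigcup_{i=1}^{k}\overline{\Delta_{r}(z_{i})}$
\[
h(c,z)=z_{i}+\frac{c(r)}{c}\Bigl(g_{i}\bigl(z_{i}+\tfrac{c}{c(r)}(z-z_{i})\bigr)-z_{i}\Bigr)=z_{i}+\sum_{n\ge1}a_{n,i}\Bigl(\tfrac{c}{c(r)}\Bigr)^{n-1}(z-z_{i})^{n},
\]
for $z\in\overline{\Delta_{r}(z_{i})}$, $c\in\Delta$, with $a_{1,i}=1$, so that the series converges on $\Delta$ and $h(\cdot,z)$ is holomorphic with $h(0,z)=z$. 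For fixed $c\ne0$ the map $h(c,\cdot)$ on $\overline{\Delta_{r}(z_{i})}$ is a composition of the affine map $z\mapsto z_{i}+\tfrac{c}{c(r)}(z-z_{i})$ (which lands in $\overline{\Delta_{r_{0}}(z_{i})}$ because $|c|<1$), the conformal map $g_{i}$, and the affine map $z\mapsto z_{i}+\tfrac{c(r)}{c}(z-z_{i})$, hence injective; and the expansion gives $|h(c,z)-z_{i}|\le|z-z_{i}|(1+r_{0}M_{i})$, so $h(c,\overline{\Delta_{r}(z_{i})})\subset\overline{\Delta_{(1+r_{0}M_{i})r}(z_{i})}$, and for $r$ small enough these sets are pairwise disjoint; thus $h(c,\cdot)$ is injective on $E$ and $h$ is a holomorphic motion of $E$ with base point $0$, with $h(c(r),\cdot)=g_{i}$ on each $\overline{\Delta_{r}(z_{i})}$. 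I would then invoke Theorem~\ref{hmt} to extend $h$ to a holomorphic motion $H(c,z):\Delta\times\hat{\mathbb C}\to\hat{\mathbb C}$ with $H(c,\cdot)$ a $\frac{1+|c|}{1-|c|}$-quasiconformal homeomorphism, put $H_{g}=H(c(r),\cdot)$ (which restricts to $g_{i}$ on $\overline{\Delta_{r}(z_{i})}$ and has dilatation $\frac{1+c(r)}{1-c(r)}$), and take $f=H_{g}\circ R$. Then $f$ is a quasiconformal homeomorphism of $\hat{\mathbb C}$ with dilatation at most $K_{R}(r)\cdot\frac{1+c(r)}{1-c(r)}$, and for $z\in\Delta_{r}(z_{i})$ one has $R(z)=\rho_{i}(z)\in\Delta_{r}(z_{i})$, hence $f(z)=g_{i}(\rho_{i}(z))=f_{i}(z)$. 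Since $K_{R}(r)\cdot\frac{1+c(r)}{1-c(r)}\to1$ as $r\to0$, given $\epsilon>0$ I would fix $r$ small enough that this bound is $\le1+\epsilon$, finishing the proof.

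The only genuinely new ingredient compared with Theorem~\ref{flt} is the twist map $R$, forced on us by the nontrivial rotational parts $\lambda_{i}$: one cannot connect the identity to $f_{i}$ through a holomorphic family of injective maps with uniformly bounded images when the parameter at which $f_{i}$ is attained must shrink to $0$ (a Schwarz--Pick estimate shows a rotation cannot be ``unwound'' holomorphically over a vanishing parameter disk), so the rotations must be removed beforehand; the smallness of $\|\mu_{R}\|_{\infty}$ comes not from any holomorphic motion but purely from the separation of scales $r\ll r_{0}$, while everything else is the argument of \S3. In particular, when all $\lambda_{i}=1$ the map $R$ is the identity and the proof is a direct adaptation of the proof of Theorem~\ref{flt}.
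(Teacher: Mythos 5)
Your proof is correct, and the core of it — the holomorphic motion $h(c,z)=z_i+\sum_{n\ge1}a_{n,i}(c/c(r))^{n-1}(z-z_i)^n$ obtained by rescaling the nonlinear part of the germ by the parameter, together with extension by Theorem~\ref{hmt} and evaluation at $c=c(r)=r/r_0$ — is exactly the paper's own holomorphic motion $\phi(c,z)=z+\frac{r}{cr_0}\eta_i\bigl(\frac{cr_0}{r}(z-z_i)\bigr)$; expanding $\phi$ in powers of $z-z_i$ gives precisely your coefficients $a_{i,n}(cr_0/r)^{n-1}$. Your injectivity argument (the motion at parameter $c$ is a conjugate of the conformal map $g_i$ by a dilation landing in $\Delta_{r_0}(z_i)$) is cleaner than the paper's, which argues via nonvanishing of $\phi_z'$ and implicitly uses a Noshiro--Warschawski-type estimate to pass from local to global injectivity on the disk.

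The genuine point of departure is the twist map $R$. The paper's proof of Theorem~\ref{conn} writes $f_i(z)=z+a_{i,2}(z-z_i)^2+\cdots$ from the outset, so it tacitly assumes every multiplier is $1$; the case of parabolic germs with multiplier a nontrivial root of unity $\lambda_i=e^{2\pi i p_i/q_i}$ is deferred to Corollary~\ref{connmore}, where the rotation is absorbed into a separate holomorphic motion $z\mapsto z_i+e^{(c/r)\log\lambda_i}(z-z_i)$ at the price of shrinking the radius to $s=r_0e^{-a/r}$, exponentially small in $1/r$. Your logarithmic twist $R$ is a self-contained alternative to this: it removes the rotational part before the holomorphic-motion step, its dilatation $1+O\bigl(\max_i|\phi_i|/\log(r_0/r)\bigr)\to1$ comes purely from the separation of scales $r\ll r_0$, and it lets you keep the radius at $r$ rather than a doubly small $s$. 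Your Schwarz--Pick heuristic for why the twist is needed is also essentially right: with a fixed ratio of image bound to disk radius (as in your construction, where $h(c,\overline{\Delta}_r(z_i))\subset\overline{\Delta}_{(1+r_0M_i)r}(z_i)$), the induced holomorphic map $c\mapsto\log h_z'(c,z_i)$ takes values in a fixed left half-plane, so it cannot reach $i\phi_i\neq0$ at a parameter $c(r)\to0$; the paper's corollary evades this by letting the half-plane recede as $s/r_0\to0$. In short, the holomorphic-motion mechanism matches the paper's, but you have filled in, by a different device, the treatment of nontrivial multipliers that the paper's proof of Theorem~\ref{conn} leaves implicit and handles only in the corollary.
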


\begin{proof}

Denote
$$
B_{i}(r) = f_{i}(\Delta_{r}(z_{i})).
$$
Let $r_{0}>0$ be a number such that
$$
B_{i}(r) \cap B_{j}(r)=\emptyset, \quad 1\leq i\not= j\leq k,
\quad 0<r\leq r_{0}.
$$
Let
$$
E_{r}=\cup_{i=1}^{n} \overline{\Delta}_{r}(z_{i})
$$
be a closed subset of $\hat{\mathbb C}$.

For any $0<r\leq r_{0}$, write
$$
f_{i}(z) = z + a_{i,2}(z-z_{i})^{2} +\cdots + a_{i,n}
(z-z_{i})^{n} +\cdots, \quad |z-z_{i}|\leq r.
$$
Let
$$
\eta_{i}(\xi)=  a_{i,2}\xi^{2} +\cdots + a_{i,n} \xi^{n} +\cdots.
$$
Then
$$
f_{i}(z) =z+ \eta_{i}(z-z_{i}), \quad |z-z_{i}|\leq r.
$$
Let $\phi(z)$ be defined on $E_{r}$ as
$$
\phi (z) = f_{i}(z) =z +\eta_{i}(z-z_{i}) \quad \hbox{for} \quad
|z-z_{i}|\leq r,  \quad \quad i=1, \cdots, k.
$$

We introduce a complex parameter $c\in \Delta$ into $\phi(z)$ as
follows. Define
$$
\phi (c, z) = z + \frac{r}{cr_{0}} \eta_{i} \Big( \frac{cr_{0}}{r}
(z-z_{i})\Big), \quad |z-z_{i}|\leq r, \quad i=1, \cdots, k.
$$
Then
$$
\phi (c, z): \Delta\times E_{r}\to \hat{\mathbb C}
$$
is a map. We will check that it is a holomorphic motion.

For any fixed $c\in \Delta$, we have
$$
\phi_{z}' (c,z) =1+ \eta_{i}' \Big( \frac{cr_{0}}{r}
(z-z_{i})\Big) , \quad |z-z_{i}|\leq r, \quad i=1, \cdots, k.
$$
By picking $r_{0}>0$ small enough, we can assume
$$
|f_{i}'(z) | = |1+\eta_{i}' (z-z_{i})| \geq 1- |\eta_{i}'
(z-z_{i})|>0, \quad |z-z_{i}|< r_{0}, \quad i=1, \cdots, k.
$$
Thus
$$
\phi_{z}'(c,z) \not= 0, \quad \forall |z-z_{i}|\leq r, \quad i=1,
\cdots, k.
$$
We get that $\phi (c, z)$ on each $\Delta_{r}(z_{i})$ is
injective. But images of $\Delta_{r}(z_{i})$ and
$\Delta_{r}(z_{j})$, for $1\leq i\neq j\leq k$, under $\phi (c,z)$
are pairwise disjoint. So $\phi (c, z)$ is injective on $E_{r}$.

It is clear that
$$
\phi (0, z)=z, \quad z\in E_{r}.
$$

For any fixed $z\in \Delta_{r}(z_{i})$, $1\leq i\leq k$,
$$
\phi (c,z) =z + \frac{r}{cr_{0}} \eta_{i} \Big( \frac{cr_{0}}{r}
(z-z_{i})\Big).
$$
Since
$$
\Big| \frac{cr_{0}}{r} (z-z_{i})\Big| <r_{0},
$$
$$
\eta_{i} \Big( \frac{cr_{0}}{r} (z-z_{i})\Big)
$$
is a convergent power series of $c\neq 0\in \Delta$. For $c=0$,
$\phi(0,z)=z$. So $\phi (c, z)$ is holomorphic with respect to
$c\in \Delta$.

Therefore,
$$
\phi (c,z) : \Delta\times E(r) \to \hat{\mathbb C}
$$
is a holomorphic motion.

Following Theorem~\ref{hmt}, we have an extended holomorphic
motion
$$
\tilde{\phi}(c,z): \Delta\times \hat{\mathbb C};
$$
that is, $\widetilde{\phi}(c,z)|\Delta\times E_r=\phi (c,z)$.
Moreover, for any $c\in \Delta$, $\widetilde{\phi}(c,\cdot)$ is a
$(1+|c|)/(1-|c|)$-quasiconformal mapping.

Let
$$
f(z) = \widetilde{\phi}\Big( \frac{r}{r_{0}}, z\Big).
$$
Then $f(z)$ is a $(1+r/r_{0})/(1-r/r_{0})$-quasiconformal
homeomorphism. Furthermore,
$$
f|\Delta_{r}(z_{i}) = \widetilde{\phi} \Big(
\frac{r}{r_{0}},z\Big) |\Delta_{r} (z_{i})= \phi
\Big(\frac{r}{r_{0}},z\Big) |\Delta_{r} (z_{i})=f_{i}|\Delta_{r}
(z_{i}).
$$

Thus for any given $\epsilon>0$, we take $r= (2\epsilon r_{0})
/(1+\epsilon)$; then $f$ is a $(1+\epsilon)$-quasiconformal
mapping and extends $f_{i}$ for all $i=1, 2, \cdots, k$. We
completed the proof.
\end{proof}

\vspace*{10pt}
\begin{corollary}~\label{connmore}
Suppose $\{ f_{i}\}_{i=1}^{k}$ is a finite number of germs at
distinct points $\{z_{i}\}_{i=1}^{k}$ such that
$\lambda_{i}=f_{i}'(z_{i}) \neq 0$ for $1\leq i\leq k$. Then for
every $\epsilon>0$ there exist a number $s>0$ and a
$(1+\epsilon)$-quasiconformal homeomorphism $f$ of
$\hat{\mathbb{C}}$ such that
$$
f|_{\Delta_{s}(z_i)}=f_i|_{\Delta_{s}(z_i)}, \quad i=1, \cdots, k.
$$
\end{corollary}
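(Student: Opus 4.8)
The plan is to deduce this from Theorem~\ref{conn} by peeling the linear part off of each germ. Note first that the conclusion forces $f(z_{i})=f_{i}(z_{i})$, so by injectivity of $f$ the points $f_{i}(z_{i})$ must be pairwise distinct; this is automatic when $f_{i}(z_{i})=z_{i}$, which is the setting consistent with the notion of germ used throughout the paper, and I treat that case. For each $i$ let
$$
L_{i}(w)=z_{i}+\lambda_{i}(w-z_{i})
$$
be the affine — hence M\"obius, hence conformal — map fixing $z_{i}$ with $L_{i}'(z_{i})=\lambda_{i}$, and put $g_{i}:=L_{i}^{-1}\circ f_{i}$. Then $g_{i}$ is analytic near $z_{i}$, $g_{i}(z_{i})=z_{i}$, and $g_{i}'(z_{i})=\lambda_{i}^{-1}\lambda_{i}=1$, so $g_{i}(z)=z+O((z-z_{i})^{2})$ — exactly the type of germ covered by Theorem~\ref{conn}.

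The argument then splits into two steps. First, applying Theorem~\ref{conn} to $\{g_{i}\}$ produces, for every $\epsilon_{1}>0$, a number $s_{1}>0$ and a $(1+\epsilon_{1})$-quasiconformal homeomorphism $G$ of $\hat{\mathbb C}$ with $G|_{\Delta_{s_{1}}(z_{i})}=g_{i}$ for all $i$. Second — and this is the new ingredient — one needs an ``affine'' gluing statement: for distinct points $z_{i}$ and nonzero numbers $\lambda_{i}$, and for every $\epsilon_{2}>0$, there exist $s_{2}>0$ and a $(1+\epsilon_{2})$-quasiconformal homeomorphism $F_{0}$ of $\hat{\mathbb C}$ with $F_{0}|_{\Delta_{s_{2}}(z_{i})}=L_{i}$ for all $i$. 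Granting both, fix $\epsilon>0$, choose $\epsilon_{1}=\epsilon_{2}$ with $(1+\epsilon_{1})^{2}\le 1+\epsilon$, and pick $s\le s_{1}$ small enough that $g_{i}(\overline{\Delta_{s}(z_{i})})\subset\Delta_{s_{2}}(z_{i})$ for all $i$ (possible since $g_{i}$ is continuous and fixes $z_{i}$). Then $f:=F_{0}\circ G$ is quasiconformal with $K(f)\le K(F_{0})K(G)\le(1+\epsilon_{1})^{2}\le 1+\epsilon$, and for $z\in\Delta_{s}(z_{i})$ one has $f(z)=F_{0}(g_{i}(z))=L_{i}(g_{i}(z))=f_{i}(z)$, which is the assertion.

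For the affine gluing statement the case $k=1$ is trivial ($F_{0}=L_{1}$, conformal). For $k\ge 2$, fix a small $\rho_{0}>0$ so that the disks $\Delta_{\rho_{0}}(z_{i})$ are pairwise disjoint, and for $0<\rho\le\rho_{0}$ consider the $k$-connected domains
$$
\Omega(\rho)=\hat{\mathbb C}\setminus\bigcup_{i}\overline{\Delta_{\rho}(z_{i})},\qquad \Omega'(\rho)=\hat{\mathbb C}\setminus\bigcup_{i}\overline{\Delta_{|\lambda_{i}|\rho}(z_{i})}.
$$
Since each $L_{i}$ is conformal, an $F_{0}$ as desired restricts to a quasiconformal homeomorphism $\Omega(s_{2})\to\Omega'(s_{2})$ carrying $\partial\Delta_{s_{2}}(z_{i})$ to $\partial\Delta_{|\lambda_{i}|s_{2}}(z_{i})$ by $L_{i}$, and conversely any such boundary-respecting homeomorphism glues with the $L_{i}$ inside the disks to give $F_{0}$. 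It therefore suffices to construct, for $s_{2}$ small, such a homeomorphism with dilatation close to $1$. The point is that $\Omega(s_{2})$ and $\Omega'(s_{2})$ have the \emph{same} centres $\{z_{i}\}$: as $s_{2}\to 0$ both degenerate to the $k$-punctured sphere with punctures at the $z_{i}$, the collar around each hole acquiring modulus $\sim\frac{1}{2\pi}\log(1/s_{2})\to\infty$, while the only data distinguishing the two marked conformal structures — the radius ratios $|\lambda_{i}|$ and the boundary twists $\arg\lambda_{i}$ — stay bounded. A standard quasiconformal interpolation carried out collar by collar (a radial stretch by a factor tending to $1$ to adjust the radius, together with a twist of fixed total angle spread over a collar of modulus $\to\infty$) then yields a homeomorphism $\Omega(s_{2})\to\Omega'(s_{2})$ with the prescribed boundary behaviour and dilatation $1+O(1/\log(1/s_{2}))$; taking $s_{2}$ small makes this $\le 1+\epsilon_{2}$.

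I expect the affine gluing statement to be the main obstacle. Theorem~\ref{conn} only covers germs tangent to the identity, and a multiplier $\lambda_{i}$ away from $1$ cannot be absorbed into a single holomorphic motion of the type used there: a holomorphic family $c\mapsto\ell_{i}(c)$ with $\ell_{i}(0)=1$ and $\ell_{i}(c_{0})=\lambda_{i}$ for some $c_{0}\to 0$ is forced by the Schwarz lemma to leave every fixed compact subset of $\mathbb C^{*}$, which would destroy either injectivity on the small disks or disjointness of their images. Hence the linear parts must be glued by the separate uniformization-and-collar argument above (equivalently, one may extend each $L_{i}$ by the Ahlfors--Weill theorem — whose hypothesis on the hyperbolic Schwarzian norm of $L_{i}$ over $\Delta_{s_{2}}(z_{i})$ holds with norm $\to 0$ as $s_{2}\to 0$ — and then interpolate the $k$ extensions across shrinking collars). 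Everything else is the routine reduction described above.
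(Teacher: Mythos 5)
Your reduction of the general germs $f_i$ to the composition $f_i=L_i\circ g_i$ with $g_i$ tangent to the identity, followed by applying Theorem~\ref{conn} to the $g_i$ and an ``affine gluing'' lemma to the $L_i$, matches the paper's strategy (the paper writes $F_i=f_i\circ g_i$ and $f=F\circ G$, which is the same factorization up to notation). Where you part ways with the paper is in the affine gluing step, and there are two things to say.

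First, your claim that the linear parts ``cannot be absorbed into a single holomorphic motion of the type used there'' is incorrect, and the paper does exactly that. The paper sets
$$
\phi(c,z)=z_i+e^{\frac{c}{r}\log\lambda_i}\,(z-z_i),\qquad z\in\overline{\Delta}_s(z_i),
$$
with $s=r_0e^{-a/r}$, $a=\max_i|\log\lambda_i|$, and evaluates at $c=r$. You are right that the multiplier $e^{(c/r)\log\lambda_i}$ escapes every compact subset of $\mathbb{C}^*$ as $|c|\to1$; but the construction simultaneously shrinks the disk radius to $s=r_0e^{-a/r}$, so $|\phi(c,z)-z_i|\le e^{a/r}\cdot s=r_0$ for all $c\in\Delta$, keeping the images of the $\overline{\Delta}_s(z_i)$ inside pairwise disjoint disks $\overline{\Delta}_{r_0}(z_i)$. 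Injectivity of $\phi(c,\cdot)$ on $E_s$ therefore survives, and Theorem~\ref{hmt} gives a $(1+r)/(1-r)$-quasiconformal extension restricting to the $L_i$ on $\Delta_s(z_i)$. The Schwarz-lemma objection is answered precisely by the exponential shrinking of $s$ in $r$.

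Second, your replacement argument --- interpolating the $L_i$ across collars $\{s_2<|z-z_i|<\rho_0\}$ of modulus $\asymp\log(1/s_2)$, with a radial stretch and a twist of bounded total angle spread over the collar, so that the dilatation is $1+O(1/\log(1/s_2))$ --- is valid and delivers the same affine gluing lemma; the alternative via Ahlfors--Weill applied to each $L_i$ (trivially, since each $L_i$ has vanishing Schwarzian) followed by interpolation is equally fine. So your overall proof is correct, but via a genuinely different mechanism at the one nontrivial step: the paper stays within the holomorphic-motion framework of the rest of the article by making the domain radius decay exponentially in the motion parameter, whereas you use a direct modulus-of-annulus estimate. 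Both work; the paper's choice is more uniform with its methodology, while yours avoids the holomorphic-motion machinery for this step and makes the quantitative source of near-conformality (large collar modulus) more visible.
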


\begin{proof}
First suppose $r_{0}>0$ and suppose that
$$
f_{i} (z) = z_{i}+\lambda_{i}(z-z_{i}),\quad z\in D_{r_{0}}
(z_{i}), \quad \lambda_{i}\neq 0, \quad 1\leq i\leq k.
$$
Suppose
$$
\overline{\Delta}_{r_{0}}(z_{i})\cap
\overline{\Delta}_{r_{0}}(z_{j})=\emptyset,\quad  \hbox{for all
$0\leq i\neq j\leq k$}.
$$

Let
$$
a=\max \{ |\log \lambda_{i}|\;\;|\;\; 1\leq i\leq k \},
$$
and let
$$
s= r_{0}e^{-\frac{a}{r}}
$$
for any $0<r< r_{0}$.

Let $\Delta_{s} (z_{i})$ and $E_{s} =\cup_{i=1}^{k}
\overline{\Delta}_{s} (z_{i})$. Define
$$
\phi (c, z) = z_{i} + e^{\frac{c}{r} \log \lambda_{i}} (z-z_{i}),
\quad c\in \Delta, \;\; z\in \overline{\Delta}_{s}(z_{i}).
$$
We will check that
$$
\phi (c, z): \Delta\times E_{s} \to \hat{\mathbb C}
$$
is a holomorphic motion.

For $c=0$, we have $\phi (0, z) =z$ for all $z\in E_{s}$.

For each fixed $c\in \Delta$, $\phi (c, z)$ on each
$\overline{\Delta}_{s} (z_{i})$ is injective, but the image of
$\Delta_{s}(z_{i})$ under $\phi (c, z)$ is contained in
$\Delta_{r_{0}}(z_{i})$. So $\phi(c,z)$ on $E_{s}$ is injective.

For fixed $z\in E_{s}$, it is clear that $\phi (c,z)$ is
holomorphic with respect to $c\in \Delta$.

So
$$
\phi (c, z): \Delta\times E_{s} \to \hat{\mathbb C}\to
\hat{\mathbb C}
$$
is a holomorphic motion.

By Theorem~\ref{hmt}, we have an extended holomorphic motion
$$
\widetilde{\phi}(c,z): \Delta\times \hat{\mathbb C}\to
\hat{\mathbb C};
$$
that is, $\widetilde{\phi}(c,z)|\Delta\times E_s=\phi (c,z)$.
Moreover, for any $c\in \Delta$, $\widetilde{\phi}(c,\cdot)$ is a
$(1+|c|)/(1-|c|)$-quasiconformal homeomorphism.

Let $f(z) = \widetilde{\phi}(r, z)$. Then $f(z)$ is a
$(1+r)/(1-r)$-quasiconformal homeomorphism. Furthermore,
$$
f|\Delta_{s}(z_{i}) = \widetilde{\phi} (r,z) |\Delta_{s} (z_{i})=
\phi (r,z) |\Delta_{s} (z_{i})=f_{i}|\Delta_{s} (z_{i}).
$$

Now we consider the general situation,
$$
f_{i} (z) = z_{i}+\lambda_{i}(z-z_{i}) +a_{2,i}(z-z_{i})^{2}
+\cdots,\quad z\in \Delta_{r_{0}} (z_{i}), \quad \lambda_{i}\neq
0,\;\; 1\leq i\leq k.
$$
Let
$$
g_{i} (z) =z_{i} +\lambda^{-1}_{i} (z-z_{i}), \quad 1\leq i\leq k.
$$
Then
$$
F_{i}(z) =f_{i}\circ g_{i} (z) = z+
\frac{a_{2,i}}{\lambda_{i}^{2}} (z-z_{i})^{2} +\cdots, \quad 1\leq
i\leq k,
$$
are all parabolic germs.

From Theorem~\ref{conn} and the above argument, for any $\epsilon
>0$, we have $0<s<r\leq r_{0}$ and two $\sqrt{1+\epsilon}$-quasiconformal
homeomorphisms $F(z)$ and $G(z)$ of $\hat{\mathbb C}$ such that
$$
F|\Delta_{r}(z_{i}) =F_{i}|\Delta_{r}(z_{i})\quad \hbox{and}\quad
G|\Delta_{s}(z_{i}) =g_{i}^{-1}|\Delta_{s}(z_{i})
$$
and such that
$$
G(\Delta_{s}(z_{i})) \subset \Delta_{r}(z_{i}).
$$
Then $f(z) =F\circ G (z)$ is a $(1+\epsilon)$-quasiconformal
homeomorphism of $\hat{\mathbb C}$ such that
$$
f|\Delta_{s}(z_{i}) = F\circ G |\Delta_{s}(z_{i}) = f_{i}\circ
g_{i}\circ g_{i}^{-1} |\Delta_{s}(z_{i}) =f_{i}|\Delta_{s}(z_{i}).
$$
We completed the proof.
\end{proof}

\bibliographystyle{amsalpha}

\begin{thebibliography}{10}

\bibitem{AhlforsBook}
L. V. Ahlfors.
\newblock {\em Lectures on Quasiconformal
Mappings},
\newblock Van Nostrand Mathematical studies, {\bf 10},
\newblock D. Van Nostrand Co. Inc., Toronto-New York-London, 1966.

\bibitem{AhlforsBers}
L. V. Ahlfors and L. Bers.
\newblock Riemann's mapping theorem for variable metrics,
\newblock {\em Annals of Math. (2)}, {\bf 72} (1960), 385-404.

\bibitem{Bers}
L. Bers.
\newblock On a theorem of Mori and the definition of quasiconformality,
\newblock {\em Trans. Amer. Math. Soc.}, {\bf 84} (1957), 78-84.

\bibitem{BersRoyden}
L. Bers and H. L. Royden.
\newblock Holomorphic families of injections,
\newblock {\em Acta Math}, {\bf 157} (1986), 259-286.

\bibitem{Bojarski}
V. B. Bojarski.
\newblock Generalized solutions of a system of differential equations of
the first order and elliptic type with discontinuous coefficients,
\newblock {\em Math. Sbornik}, {\bf 85} (1957), 451-503.

\bibitem{CarlesonGamlinBook}
L. Carleson and T. Gamelin.
\newblock {\em Complex Dynamics}.
\newblock Universitext: Tracts in Mathematics, Springer-Verlag, New York,
1993.

\bibitem{Chirka}
E.~M. Chirka.
\newblock On the extension of holomorphic motions.
\newblock {\em Doklady Akademii Nauk}, {\bf 397} (1) (2004), 37--40.

\bibitem{Cui}
G. Cui.
\newblock Private converstaion.

\bibitem{Douady}
A.~Douady.
\newblock Prolongements de meuvements holomorphes (d'apres slodkowski et autres).
\newblock {\em Asterisque}, {\bf 227} (1995), 7--20.

\bibitem{GardinerJiangWang}
F. Gardiner, Y. Jiang, Z. Wang.
\newblock Holomorphic Motions and Related Topics.
\newblock Preprint.

\bibitem{Jiang1}
Y. Jiang.
\newblock Holomorphic motions and normal forms in
complex analysis.
\newblock {\em Studies in Advanced Mathematics}, ICCM2004@2008, AMS/IP, Vol. {\bf 42} (2008),
Part 2, 457-466.

\bibitem{Jiang2}
Y. Jiang.
\newblock Asymptotically conformal fixed points and holomorphic motions,
\newblock {\em Complex Analysis and Applications},
\newblock Proceedings of the 13th International Conference on Finite or
Infinite Dimensional Complex Analysis and Applications,
\newblock World Scientific, 2006, 109-129.

\bibitem{Lehto}
O. Lehto.
\newblock {\em Univalent Functions and Teichm\"uller Spaces}.
\newblock Springer-Verlag, New York, Berlin, 1987.

\bibitem{Li}
Z. Li.
\newblock {\em Quasiconformal Mappings and Applications in
Riemann Surfaces}.
\newblock Science publciation of China, 1988.

\bibitem{AstalaMartin}
K.~Astala and G.~Martin.
\newblock Papers on analysis, a volume dedicated to olli martio on the occasion
of his 60th birthday.
\newblock In {\em Report. Univ. Jyvaskyla}, {\bf 211} (2001), 27--40, 2001.

\bibitem{ManeSadSullivan}
R. Ma\~{n}e, P. Sad, and D. Sullivan.
\newblock On the dynamics of rational maps.
\newblock {\em Ann. \'Ec. Norm. Sup.}, {\bf 96} (1983), 193-217.

\bibitem{McMullenBook}
C. McMullen.
\newblock Hausdorff dimension and conformal dynamics, II: Geometrically finite rational maps.
\newblock {\em Comm. Math. Helv.}, {\bf 75}(2000), 535-593.

\bibitem{MilnorBook}
J. Milnor.
\newblock {\em Dynamics in One Complex Variable, Introductory Lectures}.
\newblock Vieweg, 2nd Edition, 2000.

\bibitem{Morrey}
C. B. Morrey.
\newblock On the solutions of quasi-linear elliptic partial differential
equations.
\newblock {\em Trans. Amer. Math. Math. Soc.}, {\bf 43}
(1938), 126-166.

\bibitem{Nag}
S. Nag.
\newblock {\em The Complex Analytic Theory of
Teichm\"uller Spaces}.
\newblock John Wiley and Sons, New York,
1988.

\bibitem{Slodkowski}
Z. Slodkowski.
\newblock Holomorphic motions and polynomial hulls.
\newblock {\em Proc. Amer. Math. Soc.}, {\bf 111} (1991), 347-355.

\bibitem{SullivanThurston}
D. Sullivan and W. Thurston.
\newblock Extending holomorphic motions.
\newblock {\em Acta Math.}, {\bf 157} (1986), 243-257.

\end{thebibliography}

\end{document}